\newtheorem{theorem}{Theorem}
\newtheorem{lemma}{Lemma}
\newtheorem{proposition}{Proposition}
\newtheorem{example}{Example}
\newtheorem{algorithm}{Algorithm}
\theoremstyle{definition}
\newtheorem{definition}{Definition}
\newtheorem{remark}{Remark}
\newtheorem{remarks}{Remarks}
\def\sp{\mathop{\rm sp}}
\def\Id{{\rm Id}}
\newcommand{\R}{\mathbb{R}}
\newcommand{\Z}{\mathbb{Z}}
\newcommand{\N}{\mathbb{N}}
\newcommand{\argmin}{\mathop{\mathrm{argmin}}}
\newcommand{\argmax}{\mathop{\mathrm{argmax}}}
\newcommand{\set}[1]{\left\{#1\right\}}
\renewcommand{\mod}[1]{\ (\mathrm{mod}\ #1)}
\title{Coherent sets for nonautonomous dynamical systems}
\author{Gary Froyland, Simon Lloyd,
and Naratip Santitissadeekorn \\ School of Mathematics and Statistics\\ University of New South Wales \\ Sydney, NSW 2052, Australia}
\begin{document}
\maketitle
\begin{abstract}
We describe a mathematical formalism and numerical algorithms for
identifying and tracking slowly mixing objects in nonautonomous
dynamical systems.  In the autonomous setting, such objects are
variously known as almost-invariant sets, metastable sets,
persistent patterns, or strange eigenmodes, and have proved to be
important in a variety of applications.  In this current work, we
explain how to extend existing autonomous approaches to the
nonautonomous setting. We call the new time-dependent slowly mixing
objects \emph{coherent sets} as they represent regions of phase
space that disperse very slowly and remain coherent.  The new
methods are illustrated via detailed examples in both discrete and
continuous time.
\end{abstract}


\section{Introduction}

The study of transport and mixing in dynamical systems has received
considerable attention in the last two decades; see e.g.\
\cite{Meiss1992,wiggins_92,aref_02,wiggins_05} for discussions of
transport phenomena. In particular, the detection of very slowly
mixing objects, known variously as almost-invariant sets, metastable
sets, persistent patterns, or strange eigenmodes, has found wide
application in fields such as fluid dynamics
\cite{pikovsky_popovych_03,liu_haller_04,popovych_pikovsky_eckhardt_07},
ocean dynamics \cite{froyland_padberg_england_treguier_07,
npg}, astrodynamics
\cite{dellnitz_etal_05}, and molecular dynamics
\cite{deuflhard_etal_00,schuette_huisinga_deuflhard_01}.  A
shortcoming of this prior work, based around eigenfunctions of
Perron--Frobenius operators (or transfer operators, or evolution
operators) is the restriction to autonomous systems or periodically
forced systems. In this work, we extend the notions of
almost-invariant sets, metastable sets, persistent patterns, and
strange eigenmodes to time-dependent
 \emph{Lagrangian coherent sets}.  These coherent sets form a time parameterised family of sets that approximately follow
the flow and disperse very slowly; in other words \textit{they stay coherent}.  Coherent sets are the natural nonautonomous analogue to
almost-invariant sets.

The standard dynamical systems model of transport assumes that the
motion of passive particles are completely determined by either an
autonomous or a time-dependent vector field. Traditional approaches
to understanding transport are based upon the determination of the
location of geometric objects such as invariant manifolds.  In the
autonomous setting, an invariant manifold of one dimension less than
the ambient space will form an impenetrable transport barrier that
locally partitions the ambient space. In the periodically-forced
setting, primarily in two-dimensional flows, it has been shown that
slow mixing in the neighbourhood of invariant manifolds is sometimes
controlled by ``lobe dynamics''
\cite{romkedar_etal_1990,romkedar_wiggins_90,wiggins_92}.  In the
truly non-autonomous, or aperiodically forced setting, finite-time
hyperbolic material lines \cite{haller_00} and surfaces
\cite{haller_01} have been proposed as generalisations of invariant
manifolds that form barriers to mixing.  These material lines and
surfaces are known as \emph{Lagrangian coherent structures};  see
also \cite{shadden_lekien_marsden_05} for an alternative definition.
The geometric approach can often be used to find co-dimension 1 sets
(coherent structures) that form boundaries of coherent sets.

An alternative to the geometric approach is the ergodic theoretic
approach, which attempts to locate almost-invariant sets (or
metastable sets) directly, rather than inferring their location
indirectly from their boundaries. The basic tool is the
Perron--Frobenius operator (or transfer operator).  Real eigenvalues
of this operator close to 1 correspond to eigenmodes that decay at
slow (exponential) rates.
Almost-invariant sets are heuristically
determined from the corresponding eigenfunctions $f$ as sets of the form $\{f> c\}$
or $\{f<c\}$ for thresholds $c\in\mathbb{R}$. Such an approach arose
in the context of smooth autonomous maps and flows on subsets of
$\mathbb{R}^d$ \cite{dellnitz_junge_99, dellnitz_junge_97} about a
decade ago. Further theoretical and computational extensions have
since been constructed \cite{froyland_dellnitz_03, froyland_05,
froyland_08}. A parallel series of work specific to time-symmetric
Markov processes and applied to identifying molecular conformations
was developed in
\cite{schuettehabil,deuflhard_etal_98,deuflhard_etal_00} and
surveyed in \cite{schuette_huisinga_deuflhard_01}.

There have been some recent studies of the connections between slow
mixing in \emph{periodically} driven fluid flow and eigenfunctions
of Perron--Frobenius operators. Liu and Haller \cite{liu_haller_04}
observe via simulation a transient ``strange eigenmode'' as
predicted by classical Floquet theory. Pikovsky and Popovych
\cite{pikovsky_popovych_03,popovych_pikovsky_eckhardt_07}
 numerically integrated an advection-diffusion equation to simulate the evolution
of a passive scalar, observing that it is the sub-dominant
eigenfunction of the Perron--Frobenius operator that describes the
most persistent deviation from the unique steady state.

The Perron--Frobenius operator based approach has been successful in
a variety of application areas, however, as the key mathematical
object is an \emph{eigen}function, there is no simple extension of
the method to systems that have \emph{nonperiodic} time
dependence\footnote{A relevant analogy to see this is the following.
Consider repeated application of a single matrix $A$.  The
eigenvectors of $A$ provide information on directions of exponential
growth/decay specified by the corresponding eigenvalues. Similarly,
the eigenvectors of a product of matrices $A_k\cdots A_2 A_1$
describe directions of exponential growth/decay, specified by the
eigenvalues of the product, under \emph{repeated} application of
this matrix product.  However, the directions of exponential
growth/decay under a \emph{non-repeating} product $\cdots A_k\cdots
A_2 A_1$ cannot be in general be found as eigenvectors of some
matrix.}



Indeed, Liu and Haller \cite{liu_haller_04} state that:
\begin{quote}
``...strange eigenmodes may also be viewed as eigenfunctions of an
appropriate Frobenius-Perron operator...This fresh approach offers
an alternative view on scalar mixing, but leaves the questions of
completeness and general time-dependence open.''
\end{quote}
It is this question of general time-dependence that we address in
the current work. We extend a standard formalism for random
dynamical systems to the level of Perron--Frobenius operators to
create a Perron--Frobenius operator framework for general
time-dependence. We also state an accompanying numerical algorithm,
and demonstrate its effectiveness in identifying strange eigenmodes
and coherent sets.

An outline of the paper is as follows. In Section 2 we formalise the
notions of nonautonomous systems in both discrete and continuous
time.  In Section 3 we describe a Galerkin projection method that we
will use to produce finite matrix representations of
Perron--Frobenius operators.  In Section 4 we define the critical
constructions for the nonautonomous setting.  We show that the nonautonomous analogues of strange eigenmodes are described by the ``Oseledets subspaces'' or
``Lyapunov vectors'' corresponding to compositions of the projected
Perron--Frobenius operators.  In Section 5 we describe in detail a
numerical algorithm to practically compute these slowly decaying
modes, and demonstrate that in the continuous time setting, these
modes vary continuously in time.  Our numerical approach is
illustrated firstly in the discrete time setting with an aperiodic
composition of interval maps, and secondly in the continuous time
setting with an aperiodically forced flow on a cylinder.  Section 6
provides some further background on almost-invariant sets and
coherent sets and Section 7 describes a new heuristic to extract
coherent sets from slowly decaying modes in the nonautonomous
setting.  This heuristic is then illustrated using the examples from
Section 5.
\section{Nonautonomous Dynamical Systems}

We will treat time dependent dynamical systems on a smooth compact
$d$-dimensional manifold $M\subset\mathbb{R}^D$, $D\ge d$ in both
discrete and continuous time. In order to keep track of ``time'' we
use a probability space $(\Omega,\mathcal{H},\mathbb{P})$, with the
passing of time controlled by an ergodic automorphism
$\theta:\Omega\circlearrowleft$ preserving $\mathbb{P}$ (ie.\
$\mathbb{P}=\mathbb{P}\circ \theta^{-t}$ for all $t\ge 0$). We
require this somewhat more complicated description of time for
technical reasons: to run the ergodic-theoretic arguments in Theorem
\ref{thm:main} that guarantee the existence of the nonautonomous
analogues of strange eigenmodes. The requirement that $\mathbb{P}$
be an ergodic probability measure rules out obvious choices for
$\Omega$ and $\theta$:  (i) in discrete time, $\Omega=\mathbb{Z}$
and $\theta^s(t)=t+s$, and (ii) in continuous time,
$\Omega=\mathbb{R}$ and $\theta^s(t)=t+s$. In both (i) and (ii),
there is no ergodic probability measure on $\Omega$ preserved by
$\theta$. In the next two sections, we will introduce suitable
examples of $\Omega$ and $\theta$ and describe the nonautonomous
systems they generate.


\subsection{Discrete time -- Maps}

In the discrete time setting, we will think of $\Omega\subset
(\mathbb{Z})^\mathbb{Z}$, and $\theta$ as a left shift $\sigma$ on
$\Omega$ defined by $(\sigma\omega)_i=\omega_{i+1}$, where
$\omega=(\ldots,\omega_{-1},\omega_0,\omega_1,\ldots)\in\Omega$. We
assume that $\sigma$ is ergodic with respect to $\mathbb{P}$. Let
$\mathcal{T}=\{T_{\omega_0}\}_{{\omega_0}\in \mathbb{Z}}$ be a
collection of (possibly non-invertible) piecewise differentiable maps on a
compact manifold $M$. For brevity, we will sometimes write
$T_\omega$ in place of $T_{\omega_0}$. We will define a
nonautonomous dynamical system by map compositions of the form
$T_{\sigma^{k-1}\omega}\circ\cdots\circ T_{\sigma\omega}\circ
T_{\omega}$. Define
$$\Phi(k,\omega,x):=\left\{
                      \begin{array}{ll}
                        T_{\sigma^{k-1}\omega}\circ\cdots\circ
T_{\sigma\omega}\circ T_{\omega}(x), & \hbox{$k>0$;} \\
                        \Id, & \hbox{$k=0$;} \\
                        T^{-1}_{\sigma^{-k}\omega}\circ\cdots\circ
T^{-1}_{\sigma^{-2}\omega}\circ T^{-1}_{\sigma^{-1}\omega}(x), & \hbox{$k<0$.}
                      \end{array}
                    \right.
$$
For $k\ge 0$ (resp.\ $k<0$), $\Phi(k,\omega,x)$ represents the forward time (resp.\ backward time) $k$-fold application
of the nonautonomous dynamics to the point $x$ initialised at
``time'' $\omega$.  Whenever $T_\omega$ is non-invertible, $T^{-1}_\omega(x)$ will represent the finite set of all preimages of $x$. We call $\Phi$ a \emph{map cocycle}.

\begin{definition}
\label{invariantmeasurediscrete} Endow $M$ with the Borel
$\sigma$-algebra and let $\mu$ be a probability measure on $M$.  We
call $\mu$ an \emph{invariant measure} if
$\mu\circ\Phi(-1,\omega,\cdot)=\mu$ for all $\omega\in\Omega$.
\end{definition}

This definition of an invariant measure is stricter than is usual
for random or nonautonomous dynamical systems (e.g.\  \cite[Definition 1.4.1]{arnoldbook}).  More generally, one may allow
sample measures $\mu=\mu_\omega$ and insist that
$\mu_{\sigma^{-1}\omega}\circ\Phi(-1,\omega,\cdot)=\mu_{\omega}$ for
all $\omega\in\Omega$.

\begin{example}[Aperiodic map cocycle]\label{Discreteeg}
We construct a map cocycle $\Phi$ by the composition of maps $T_i$ from a collection $\mathcal{T}$ according to sequences of indices $\omega\in\Omega$.
The collection $\mathcal{T}:=\set{T_1,T_2,T_3,T_4}$ consists of expanding maps of the circle $S^1$, which we think of as $[0,1]$ with endpoints identified. The sequence space $\Omega\subset\set{1,2,3,4}^\Z$ is given by
$$
\Omega = \set{\omega\in\set{1,2,3,4}^\Z:\forall i\in\Z,\, M_{\omega_i\omega_{i+1}}=1},
$$
with adjacency matrix
$$
M=\left(\begin{array}{cccc}
1&1&0&0\\
0&0&1&1\\
1&1&0&0\\
0&0&1&1
\end{array}\right).
$$
Elements of $\Omega$ correspond to bi-infinite paths in the graph Figure 1.
\begin{figure}[tb]
    \centering
    \psfrag{1}{\small{1}}
    \psfrag{2}{\small{2}}
    \psfrag{3}{\small{3}}
    \psfrag{4}{\small{4}}
        \includegraphics[width=0.60\textwidth]{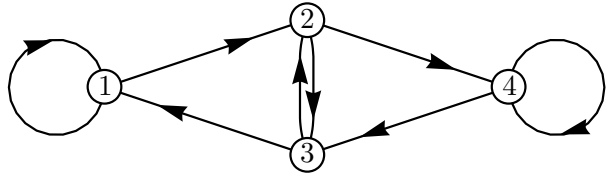}
    \caption{Graph of the sequence space $\Omega$.}
    \label{fig:subshift}
\end{figure}
The shift $\sigma:\Omega\to\Omega$ is a subshift of finite type. A
Borel $\sigma$-algebra $\mathcal{H}$ is generated by the length-one
cylinder sets $C_i=\set{\omega:\omega_0=i}$, $i=1,\ldots,4$, and by
giving equal measure to these four cylinder sets, we generate a
shift-invariant probability measure $\mathbb{P}$.

The maps of $\mathcal{T}$ are defined in terms of a continuous piecewise-linear map $H_a:S^1\to S^1$, which has almost-invariant sets (see Definition \ref{aidefn}, Section 6) $[0,0.5]$ and $[0.5,1]$ for $a$ close to zero.
Define
$$
H_a(x) = \left\{\begin{array}{ll}
+3x         & 0 \leq x < \frac{1}{6} + \frac{1}{2}a, \\
-3x+3a+1& \frac{1}{6} + \frac{1}{2}a \leq x < \frac{1}{3} + \frac{2}{3}a, \\
+3x-a-1 &  \frac{1}{3} + \frac{2}{3}a \leq x < \frac{2}{3} + \frac{2}{3}a, \\
-3x+3a+3& \frac{2}{3} + \frac{2}{3}a \leq x < \frac{5}{6} + \frac{1}{2}a, \\
+3x-2   &  \frac{5}{6} + \frac{1}{2}a \leq x \leq 1,
\end{array}\right.
$$
where values are taken modulo $1$. Figure \ref{fig:H0} shows a graph of $H_{0}$.
\begin{figure}[tb]
    \centering
    \psfrag{T1}{$H_{0}$}
        \includegraphics[width=0.30\textwidth]{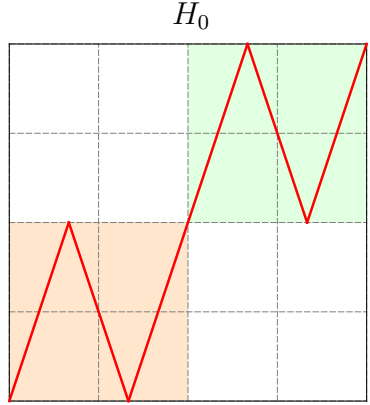}
    \caption{The map $H_{0}$ has invariant sets $[0,0.5]$ and $[0.5,1]$;  that is, $H_0^{-1}([0,0.5])=[0,0.5]$ and $H_0^{-1}([0.5,1])=[0.5,1]$.}
    \label{fig:H0}
\end{figure}
Let $a_i\in\R$, $i=1,\ldots,4$, be close to zero, for example $(a_1,a_2,a_3,a_4)=(\pi,2\sqrt{2}, \sqrt{3},e)/40$. We now construct the map $T_i$ from $H_{a_i}$, for $i=1,\ldots,4$ as follows:
\begin{eqnarray*}
T_1 &=& H_{a_1}(x) \\
T_2 &=& R\circ H_{a_2}(x) \\
T_3 &=& H_{a_3}\circ R^{-1} \\
T_4 &=& R\circ H_{a_4}\circ R^{-1},
\end{eqnarray*}
where $R:S^1\to S^1$ is the rotation $R(x)=x+1/4\mod{1}$;  see
Figure \ref{fig:4T}.
\begin{figure}[tb]
    \centering
    \psfrag{T1}{$T_1$}
    \psfrag{T2}{$T_2$}
    \psfrag{T3}{$T_3$}
    \psfrag{T4}{$T_4$}
        \includegraphics[width=0.90\textwidth]{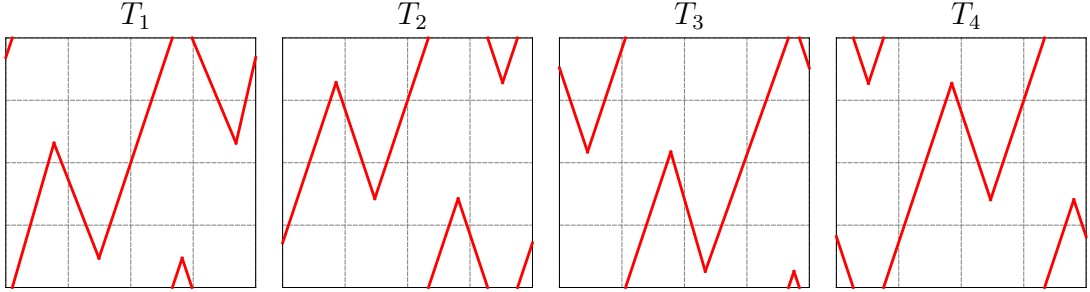}
    \caption{Graphs of $T_i$ for $i=1,\ldots,4$.}
    \label{fig:4T}
\end{figure}
\end{example}

Let $m$ denote normalised Lebesgue measure on $M$. To each map
$T_{\omega}$ we associate a Perron--Frobenius operator
$\mathcal{P}_{{\omega}}:L^1(M,m)\circlearrowleft$ defined by
$\mathcal{P}_{{\omega}}f=\sum_{y\in T^{-1}_{\omega}x}{f(y)}/{|\det
DT_{\omega}(y)|}$. The operator $\mathcal{P}_\omega$ is a linear operator that acts on integrable functions in analogy to the action of $T_\omega$ on points.  If $f\in L^1(M,m)$ represents a density function for an ensemble of initial conditions, then $\mathcal{P}_\omega f$ represents the density function of the ensemble after the action of $T_\omega$ has been applied to the ensemble.  The map cocycle $\Phi$ naturally generates a
\emph{Perron--Frobenius cocycle}
$\mathcal{P}^{(k)}_\omega=\mathcal{P}_{{\sigma^{k-1}\omega}}\circ\cdots\circ
\mathcal{P}_{{\sigma\omega}}\circ \mathcal{P}_{{\omega}}$.
This composition of $k$ Perron--Frobenius operators capture the action on a function $f$ after $k$ iterations of the non-autonomous system.


\subsection{Continuous time -- Flows}
Let $F:\Omega\times M\to\mathbb{R}^d$ be a sufficiently regular
vector field. More precisely, we suppose that $F$ satisfies the
conditions of \cite[Theorem 2.2.2]{arnoldbook}, which will guarantee
the existence of a classical solution of the nonautonomous ODE
$\dot{x}(t)=F(\theta^t\omega,x(t))$, $t\in\mathbb{R}$.

To be concrete about the probability space  $(\Omega,\mathcal{F},\mathbb{P})$ in the continuous time setting, we may set $\Omega=\Xi\subset \mathbb{R}^{d_1}$, $d_1\ge 3$, and consider an autonomous ODE $\dot{z}=g(z)$ on $\Xi$.
Denote the flow for this ODE by $\xi:\mathbb{R}\times\Xi\to\Xi$ and suppose that $\xi$ preserves the probability measure $\mathbb{P}$;  that is, $\mathbb{P}\circ\xi(-t,\cdot)=\mathbb{P}$ for all $t\in\mathbb{R}$.
Thus, the autonomous, aperiodic flow $\xi$ drives the nonautonomous ODE
\begin{equation}
 \label{ode}
\dot{x}(t)=F(\theta^t\omega,x(t))=F(\xi(t,z),x(t)).
\end{equation}
We think of points $z\in \Xi$ as representing generalised time. We
assume that $(\Xi,\xi,\mathbb{P})$ is ergodic in the sense that if
$\xi(-t,\tilde{\Xi})=\tilde{\Xi}$ for some $\tilde{\Xi}\subset\Xi$
and for all $t\ge 0$ then $\mathbb{P}(\tilde{\Xi})=0$ or 1.


Denote by $\phi:\mathbb{R}\times\Xi\times M\to M$ the flow for (\ref{ode}).
The flow $\phi$ satisfies $\frac{d}{dt}\phi(t,z,x)=F(\xi(t,z),\phi(t,z,x))$.

\begin{definition}
\label{invariantmeasurects} Endow $M$ with the Borel
$\sigma$-algebra and let $\mu$ be a probability measure on $M$.  We
call $\mu$ an \emph{invariant measure} if
$\mu\circ\phi(-t,z,\cdot)=\mu$ for all $z\in \Xi$ and $t\in
\mathbb{R}$.
\end{definition}

\begin{remark}
 \label{invmeasremark}
In Definition \ref{invariantmeasurects} we are insisting that $\mu$
is preserved at all ``time instants''.  As in the discrete time setting, more generally one may
allow $\mu=\mu_z$ and insist that
$\mu_{\xi(-t,z)}\circ\phi(-t,z,\cdot)=\mu_z$. However, as we will soon begin to focus on
coherent sets rather than invariant measures, we will restrict the
invariant measure to a ``time independent'' measure for clarity of
presentation. This is perfectly reasonable for one of the main
applications we have in mind, namely, aperiodically driven fluid
flow where $\mu\equiv$ Lebesgue, and volume is preserved by the flow
at all times.
\end{remark}

\begin{example}\label{ex:LZ}
%
Consider the following nonautonomous system on a cylinder
$M=S^1\times [0,\pi]$. Let $\xi:\mathbb{R}\times \mathbb{R}^3\to
\mathbb{R}^3$ denote the flow for the driving system generated by
the Lorenz system of ODEs (\ref{eq::LZwave2a})--(\ref{eq::LZwave2c})
with standard parameters $\sigma=10$, $\beta=8/3$, $\rho=28$.
\begin{eqnarray}
\label{eq::LZwave2a}\dot{z_1}&=&\sigma(z_2-z_1)/\tau\\
\label{eq::LZwave2b}\dot{z_2}&=&(\rho z_1-z_2-z_1z_3)/\tau\\
\label{eq::LZwave2c}\dot{z_3}&=&(-\beta z_3+z_1z_2)/\tau.
\end{eqnarray}
It is well known that this Lorenz flow possesses an SBR measure
$\mathbb{P}$~\cite{tucker_99}. Let the time-dependent vector field
$F:\mathbb{R}\times S^1\times [0,\pi]\to S^1\times [0,\pi]$ generate
our non-autonomous ODE $(\dot{x}(t),\dot{y}(t))=F(\xi(t,z),x(t),y(t))$.
Explicitly,
\begin{eqnarray}
\label{eq::LZwave1a}\dot{x} &=& c-A\sin(x-\nu z_1(t))\cos(y)\qquad\mod{2\pi}\\
\label{eq::LZwave1b}\dot{y} &=& A\cos(x-\nu z_1(t))\sin(y),
\end{eqnarray}
with $c=0.5$, $A=1$, $\nu=0.25$. We set initial condition $z(0)=(0,1,1.5)$ and take the $z_1$-coordinate of the
Lorenz driving system to represent the generalized time for the
vector field $F(\xi(t,z),x(t),y(t))$. We use a scaling factor of $\tau=6.6685$
so that the temporal and spatial variation of $z_1(t)$ is similar to
that of the ``actual'' time $t$. Since $F(\xi(t,z),x,y)$ is
differentiable and bounded on $M$ for all $t$, classical solutions
of the nonautonomous ODE (\ref{eq::LZwave1a})--(\ref{eq::LZwave1b})
exist. The system~(\ref{eq::LZwave2a})--(\ref{eq::LZwave1b})
uniquely generates an RDS, see \cite[Theorem 2.2.2]{arnoldbook}.
In Figure~\ref{fig::LZwave} we demonstrate a trajectory of three different initial
points.
\begin{figure}[tb]
\centerline{\includegraphics[scale=0.5]{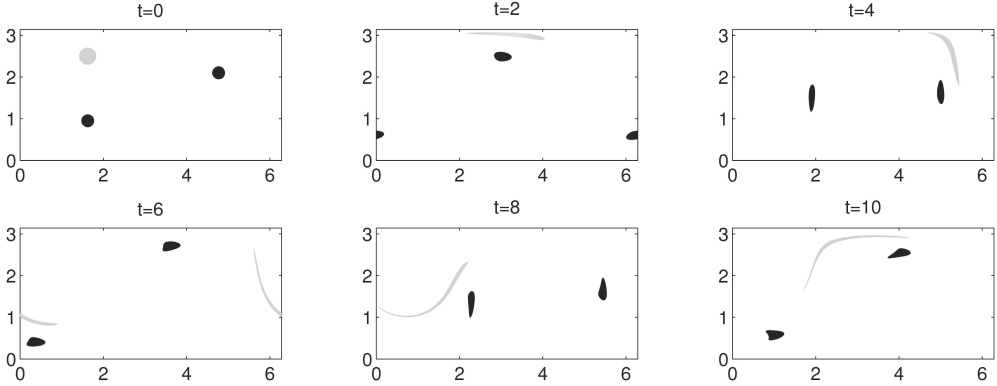}}
\caption{Trajectory of the time-dependent
system~\eqref{eq::LZwave1a}--\eqref{eq::LZwave1b} driven by the
Lorenz system at generalized times $\xi(t,z)$ }\label{fig::LZwave}
\end{figure}
\end{example}


We may define a family of Perron--Frobenius operators as
$\mathcal{P}_z^{(t)}f(x)=f(\phi(-t,\xi(t,z),x))\cdot|\det
D\phi(-t,\xi(t,z),x)|$ for $t\ge 0$. This family is a semigroup in $t$ as
$\mathcal{P}_z^{(t_1+t_2)}f=\mathcal{P}_{\xi(t_1,z)}^{(t_2)}\mathcal{P}_z^{(t_1)}f$.

\section{Galerkin projection and matrix cocycles}

Let $\mathcal{B}_n=\sp\{\chi_{B_i}:B_i\in \mathfrak{B}\}$ where
$\mathfrak{B}=\{B_1,\ldots,B_n\}$ is a partition of $M$ into connected
sets of positive Lebesgue measure. Define a projection
$\pi_n:L^1(M,m)\to\mathcal{B}_n$ as
\begin{equation}
 \label{pineqn}
\pi_nf=\sum_{i=1}^n \frac{\int_{B_i} f\,\mathrm{d}m}{m(B_i)}\chi_{B_i}.
\end{equation}
Following Ulam \cite{ulam}, in the sequel we will consider the finite rank operators
$\pi_n\mathcal{P}_\omega^{(1)}:L^1(M,m)\to\mathcal{B}_n$ and
$\pi_n\mathcal{P}_z^{(1)}:L^1(M,m)\to\mathcal{B}_n$, and the matrix
representations of the restrictions of
$\pi_n\mathcal{P}_\omega^{(1)}$ and  $\pi_n\mathcal{P}_z^{(1)}$ to
$\mathcal{B}_n$. We denote these matrix representations (under
multiplication on the right) by $P(\omega)$ and $P(z)$. Extending
Lemma 2.3 \cite{li} in a straightforward way to
the nonautonomous setting, one has
\begin{equation}
 \label{discreteulam}
P(\omega)_{ij}=\frac{m(B_j\cap \Phi(-1,\sigma\omega,B_i))}{m(B_i)}
\end{equation}
and 
\begin{equation}
 \label{continuousulam}
P(z)_{ij}=\frac{m(B_j\cap \phi(-1,\xi(1,z),B_i))}{m(B_i)}
\end{equation}
In particular, these matrices are numerically accessible.

\begin{remark}
Note we do not concern ourselves at all with the relationship
between $\mathcal{P}_{\omega}^{(1)}$ and
$\pi_n\mathcal{P}_{\omega}^{(1)}$;  this is a subtle
relationship and beyond the scope of this work. See
\cite{li,garysbr,ding,murraythesis,froylandrandomulam,bkl,murraynonuniform} for work
in this direction.
\end{remark}

The matrices $P(\omega)$ and $P(z)$ generate matrix cocycles
\begin{equation}
 \label{discretematrixcocycle}
P^{(k)}(\omega):=P(\sigma^{k-1}\omega)\cdots P(\sigma\omega)\cdot
P(\omega)
\end{equation}
and
\begin{equation}
 \label{ctsmatrixcocycle}
P^{(k)}(z):=P(\xi(k-1,z))\cdots P(\xi(1,z))\cdot P(z).
\end{equation}

\section{Discretised Oseledets functions and the Multiplicative
Ergodic Theorem} \label{METsect}
In periodically driven flows, Liu and Haller \cite{liu_haller_04}
and Pikovsky and Popovych \cite{pikovsky_popovych_03}, observed that
certain tracer patterns persisted for long times before eventually
relaxing to the equilibrium tracer distribution.  Pikovsky and
Popovych \cite{pikovsky_popovych_03} recognised these patterns as
graphs of eigenfunctions of a Perron--Frobenius operator
corresponding to an eigenvalue $L<1$. These eigenfunctions decay
over time and the closer $L$ is to 1, the slower the decay and the
more slowly an initial tracer distribution will relax to
equilibrium. We now develop a framework for the considerably more difficult
aperiodic setting.

Consider some suitable Banach space $(\mathcal{F},\|\cdot\|)$ of
real valued functions;  $\mathcal{F}$ is the function class in which
we search for slowly decaying functions. Suppose that the norm is
chosen so that for each $\omega\in\Omega$ and $k\ge 0$, the operator
$\mathcal{P}^{(k)}_\omega$ is Markov;
 that is, $\|\mathcal{P}^{(k)}_\omega\|=1$ for all $\omega$ and $k\ge
0$.  For $f\in\mathcal{F}$, we calculate the following limit:
\begin{equation}
\label{lambdaeqn}
\lambda(\omega,f)=\limsup_{k\to\infty}\frac{1}{k}\log\|\mathcal{P}^{(k)}_\omega f\|.
\end{equation}
We refer to $\lambda(\omega,f)\le 0$ as the \textit{Lyapunov
exponent} of $f$. If $f$ decays under the action of the
Perron--Frobenius operators at a geometric rate of $r^k$, $0<r<1$,
then $\lambda(\omega,f)=\log r$. The closer $r$ is to 1, the slower
the decay.  The extreme case of $r=1$ (no decay) is exhibited when
$f$ is the density of the invariant measure $\mu$ that is common to
all maps in our nonautonomous dynamical system.  We define the
\textit{Lyapunov spectrum}
$\Lambda(\mathcal{P},\omega):=\{\lambda(\omega,f):f\in\mathcal{F}
\}$.  In the aperiodic setting the new mathematical objects that are
analogous to strange eigenmodes and persistent patterns will be
called \emph{Oseledets functions.}
\begin{definition}
\label{strangeeigenmodedefn} \emph{Oseledets functions} correspond
to $f$ for which (i) $\lambda(\omega,f)$ is near zero and (ii) the
value $\lambda(\omega,f)$ is an isolated point in the
\textit{Lyapunov spectrum}.
\end{definition}
By considering $(\mathcal{F},\|\cdot\|)=(\mathcal{B}_n,\|\cdot\|_1)$, the actions of $\mathcal{P}_\omega^{(k)}$ and $\mathcal{P}^{(k)}_z$ are described by $P^{(k)}(\omega)$ and $P^{(k)}(z)$, respectively.
We may replace $\mathcal{P}_\omega^{(k)}$ and $\mathcal{P}^{(k)}_z$ in (\ref{lambdaeqn}) by $P^{(k)}(\omega)$ and $P^{(k)}(z)$, respectively, to obtain
a standard setting where the possible values of $\lambda(\omega,f)$
are the Lyapunov exponents of cocycles of $n\times n$ matrices, and
\begin{equation}
\label{discretespectrum}
\Lambda(P,\omega):=\left\{\lim_{k\to\infty}\frac{1}{k}\log\|{P}^{(k)}(\omega)
f\|_1: f\in\mathcal{B}_n\right\}\end{equation}
 and
 \begin{equation}
\label{ctsspectrum}
\Lambda(P,z):=\left\{\lim_{k\to\infty}\frac{1}{k}\log\|{P}^{(k)}(z)
f\|_1: f\in\mathcal{B}_n\right\},\end{equation} exist for $\mathbb{P}$ almost-all $\omega\in\Omega$, and consist of at most
$n$ isolated points, $\lambda_n<\cdots<\lambda_1=0$. Of particular
interest to us is the function $f_2(\omega)$ (or $f_2(z)$)
 in $\mathcal{B}_n$, which represents the function that decays at the
slowest possible geometric rate $\lambda_2$.
\begin{remark}
In certain settings, this matrix cocycle \emph{exactly} captures all
large isolated Lyapunov exponents of the operator cocycle
$\mathcal{P}:(\mathrm{BV},\|\cdot\|_{\mathrm{BV}})\circlearrowleft$.
One such setting is a map cocycle formed by composition of piecewise
linear expanding maps with a common Markov partition
$\mathfrak{B}=\{B_1,\ldots,B_n\}$; 
see \cite{froyland_lloyd_quas}.
\end{remark}

The following example illustrates the concept of Lyapunov spectrum
and Oseledets functions in the familiar autonomous setting. For the
remainder of this section, we adopt the discrete time notation of
$\sigma$ and $\omega$.

\begin{example}[``Autonomous'' single map]
\label{singlemapeg}  In \cite{foureggs} individual maps are
constructed for which the Perron--Frobenius operator has at least
one non-unit isolated eigenvalue when acting on the Banach space
$(BV,\|\cdot\|_{BV})$. A single autonomous map may be regarded as a
cocycle over a one-point space $\Omega=\{\omega\}$, and so we may
drop the dependence on $\omega$ in notation. Keller \cite{keller_84}
shows that for a piecewise expanding map $T$ of the interval $I$,
the spectrum of the associated Perron--Frobenius operator
$\mathcal{P}$ has an essential spectral radius $\rho_{\rm ess}(\mathcal{P})$
equal to the asymptotic local expansion rate $\sup_{x\in I} \lim_{k\to\infty}
\left|1/\mathrm{D}T^k(x)\right|^{1/k}$, and that there are at most
countably many spectral points, each isolated, of modulus greater
than $\rho_{\rm ess}(\mathcal{P})$.
In order to have an isolated spectral point, we construct a map of
$S^1$ which has an almost-invariant set (see Definition
\ref{aidefn}). The relation between almost-invariant sets and
isolated eigenvalues was noted in \cite{dellnitz_junge_99}. Consider
the partition $\mathfrak{B}=\{B_i: i=1,\ldots,6\}$, where
$B_i=((i-1)/6,i/6)$. Given $a\in\Z^6$, any map $T:S^1\to S^1$
defined by
\begin{eqnarray}\label{eq:Tformula}
T(x) = 3x-(i-1)/2+a_i/6\mod{1}, \quad x \in B_i
\end{eqnarray}
is Markov with respect to $\mathfrak{B}$.
Here we take $a=(0,0,1,4,3,3)$; see Figure \ref{fig:OneMap}. Notice that there is a low transfer of mass between the two intervals $[0,1/2]$ and $[1/2,1]$.
Since $\mathfrak{B}$ is a Markov partition for $T$, the space of characteristic functions $\mathcal{B}_6 =\{\chi_{B_i}:i=1,\ldots,6\}$ is an invariant subspace of $\mathrm{BV}$ for the Perron--Frobenius operator $\mathcal{P}$ of $T$. Thus the action of $\mathcal{P}_\omega = \mathcal{P}$ on $\mathcal{F}=\mathcal{B}_6$
is represented by the matrix
\begin{eqnarray}P = P(\omega) = \frac{1}{3}
\left(\begin{array}{cccccc}
1 & 1 & 0 & 1 & 0 & 0 \\
1 & 1 & 1 & 0 & 0 & 0 \\
1 & 1 & 1 & 0 & 0 & 0 \\
0 & 0 & 1 & 0 & 1 & 1 \\
0 & 0 & 0 & 1 & 1 & 1 \\
0 & 0 & 0 & 1 & 1 & 1 \\
\end{array}\right),
\end{eqnarray}
which has non-zero eigenvalues $L_1=1,L_2=(1+\sqrt{2})/3,
L_3=(1-\sqrt{2})/3$. The map $T$ is piecewise affine with constant
slope $3$ and so the logarithm of the local expansion rate is $\log (1/3)$.

The eigenvalue $L_2\approx 0.805$ of $P$ thus gives rise to an
isolated point $\lambda_2\approx \log 0.805$ in the Lyapunov
spectrum $\Lambda(\mathcal{P})$. The corresponding Oseledets
function $f_2$ is given by
$f_2(x)=\sum_{i=1}^6w_{2,i}\chi_{B_i}(x)$, where $w_2$ is the
eigenvector of $P$ corresponding to the eigenvalue $L_2$, see Figure
\ref{fig:OneMap}.
%
\begin{figure}[tb]
    \centering
    \psfrag{T}{$T$}
    \psfrag{v}{$f_2$}
        \includegraphics[width=0.80\textwidth]{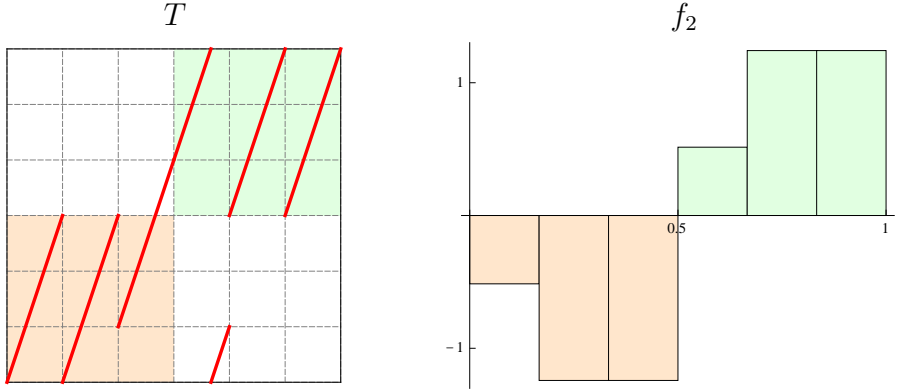}
    \caption{Graph of $T$ and Oseledets function $f_2$.}
    \label{fig:OneMap}
\end{figure}
Since $|L_3|\approx 0.138<1/3$, this means that $\log L_2$ is the
unique isolated Lyapunov exponent in $\Lambda(\mathcal{P})$. Note
that the set $\{f_2>0\}$ corresponds to the set $[0,1/2]$. We will
discuss this property further in Section \ref{cssection}.
\end{example}


\begin{example}[Periodic map cocycle]
\label{periodiceg} We construct a periodic map cocycle from a
collection of maps with a common Markov partition. The map cocycle
is formed by cyclically composing three maps of $S^1$. Consider the
sequence space $\Omega=\{\omega\in\{1,2,3\}^\mathbb{Z}:\forall
i\in\mathbb{Z}, M_{\omega_i,\omega_{i+1}}=1\}$ where
$M=\left(\begin{array}{cccc}0&1&0\\ 0&0&1\\ 1&0&0\\
\end{array}\right)$. We consider $\mathcal{T}=\{T_j:j=1,2,3\}$,
where $T_j$ is given by (\ref{eq:Tformula}) with parameter
$a^{(j)}$, where
$$
a^{(1)}=(3,2,2,0,5,5),\quad a^{(2)}=(2,1,4,5,4,1),\quad a^{(3)}=(1,3,3,4,0,0),
$$
see Figure \ref{fig:PerMaps}.
\begin{figure}[tb]
    \centering
    \psfrag{T1}{$T_1$}
    \psfrag{T2}{$T_2$}
    \psfrag{T3}{$T_3$}
        \includegraphics[width=0.90\textwidth]{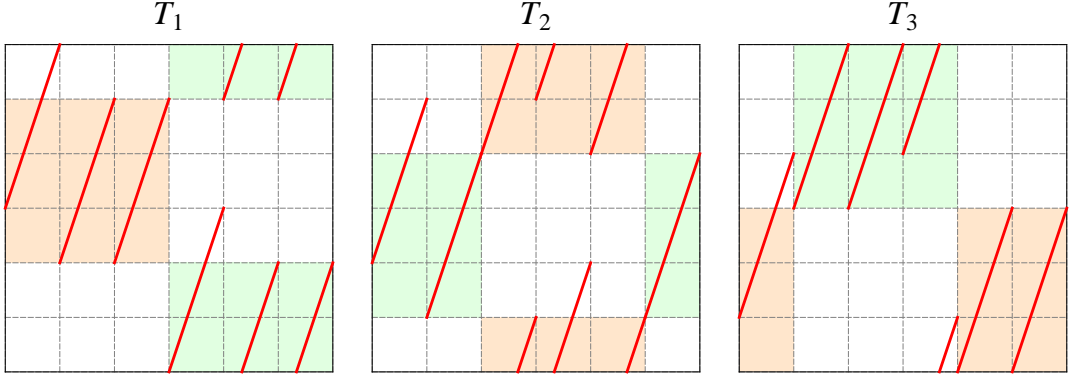}
    \caption{Graphs of $T_1$, $T_2$ and $T_3$. }
    \label{fig:PerMaps}
\end{figure}
As in Example \ref{singlemapeg} we look for Lyapunov exponents that
are strictly greater than the logarithm of the asymptotic \emph{local expansion rate}
\begin{eqnarray}
\sup_{x\in I}\lim_{k\to\infty} | 1/D(T_3\circ T_2\circ
T_1)^k|^{1/3k}.
\end{eqnarray}
As each map $T_j$ is piecewise affine with constant slope $3$, the logarithm of the local expansion rate is $\log(1/3)$. Note also that $T_1$ approximately maps $[0,1/2]$
to $[1/3,5/6]$, $T_2$ then maps $[1/3,5/6]$ approximately to
$[0,1/6]\cup [2/3,1]$, and finally $T_3$ maps $[2/3,1/3]$
approximately back to $[0,1/2]$. Each map $T_j$ leaves the space
$\mathcal{B}_6$ from Example \ref{singlemapeg} invariant, and thus
the Perron--Frobenius operator $\mathcal{P}_j$ of $T_j$ restricted
to $\mathcal{B}_6$ has matrix representation $P_j$, where $3P_j$,
$j=1,2,3$, are respectively
\begin{eqnarray}
\left(\begin{array}{cccccc}
0 & 0 & 0 & 1 & 1 & 1 \\
0 & 0 & 0 & 1 & 1 & 1 \\
0 & 1 & 1 & 1 & 0 & 0 \\
1 & 1 & 1 & 0 & 0 & 0 \\
1 & 1 & 1 & 0 & 0 & 0 \\
1 & 0 & 0 & 0 & 1 & 1 \\
\end{array}\right),
\left(\begin{array}{cccccc}
0 & 0 & 1 & 1 & 1 & 0 \\
0 & 1 & 0 & 1 & 0 & 1 \\
1 & 1 & 0 & 0 & 0 & 1 \\
1 & 1 & 0 & 0 & 0 & 1 \\
1 & 0 & 1 & 0 & 1 & 0 \\
0 & 0 & 1 & 1 & 1 & 0 \\
\end{array}\right),
\left(\begin{array}{cccccc}
0 & 0 & 0 & 1 & 1 & 1 \\
1 & 0 & 0 & 0 & 1 & 1 \\
1 & 0 & 0 & 0 & 1 & 1 \\
1 & 1 & 1 & 0 & 0 & 0 \\
0 & 1 & 1 & 1 & 0 & 0 \\
0 & 1 & 1 & 1 & 0 & 0 \\
\end{array}\right).
\end{eqnarray}
The triple product $P^{(3)}(\omega)=P_3P_2P_1$ has non-zero
eigenvalues $L_1=1, L_2=(13+ \sqrt{233})/54$ and
$L_3=(13-\sqrt{233})/54$. Since $L_2\approx 0.523$, its associated
eigenvector $w_2$ satisfies $\lambda(\omega,w_2)= \log
\sqrt[3]{\lambda_2}>\log (1/3)$. Since $P^{(3)}(\sigma^k\omega)$,
$k=1,2$, are cyclic permutations of the factors of
$P^{(3)}(\omega)$, they share the same eigenvalues, and in
particular $L_2$. Thus $(1/3)\log L_2$ is an isolated Lyapunov
exponent of $\Lambda(\mathcal{P},\omega)$ for each
$\omega\in\Omega$. Associated to the eigenvalue $L_2$, the matrices
$P(\sigma^k\omega)$, $k=0,1,2$, have corresponding eigenvectors
$w_2(\sigma^k\omega)$. The three vectors $w_2(\sigma^k\omega)$,
$k=0,1,2$, generate the periodic Oseledets functions
$f_2(\sigma^k\omega)=\sum_{i=1}^6
w_{2,i}(\sigma^{k\mod{3}}\omega)\chi_{B_i}$, see Figure \ref{fig:PerCS}.
\begin{figure}[h]
    \centering
    \psfrag{v11}{$f_2(\omega)$}
    \psfrag{v12}{$f_2(\sigma\omega)$}
    \psfrag{v13}{$f_2(\sigma^2\omega)$}
        \includegraphics[width=0.90\textwidth]{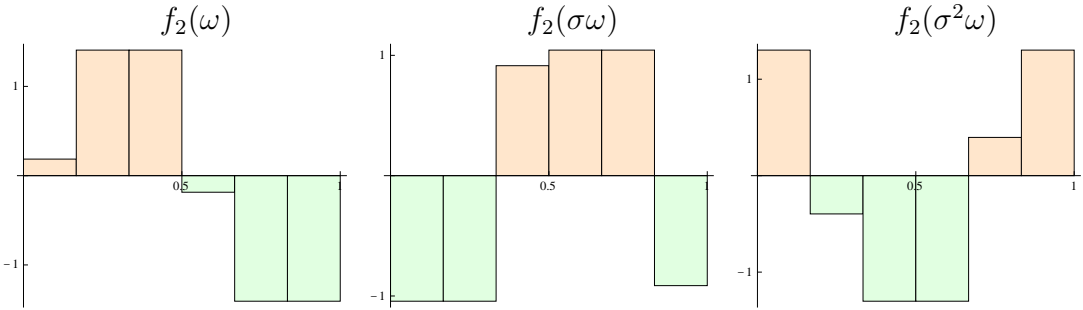}
    \caption{Oseledets functions $f_2(\sigma^k\omega)$ for $k=0,1,2$.}
    \label{fig:PerCS}
\end{figure}
Note that the sets $\{f_2(\sigma^k\omega)>0\}_{k=0,1,2}$ correspond
to the sets $[0,1/2]$, $[1/3,5/6]$, and $[0,1/6]\cup [2/3,1]$,
respectively. We
will discuss this property further in Section \ref{cssection}.
For another such example see \cite{froyland_lloyd_quas}. See also
\cite{froyland_padberg_08} for a detailed example of similar
calculations for a periodically driven flow.
\end{example}


In the nonautonomous setting, we can no longer easily construct
Oseledets functions as eigenfunctions of a single operator, or
eigenvectors of a single matrix. In fact, the Oseledets functions
are themselves (aperiodically) time dependent in the nonautonomous
setting. Our model of Oseledets functions for nonautonomous systems
is, as the name suggests, built around the Multiplicative Ergodic
Theorem, see e.g.\ \cite[Chapter 3, \S4]{arnoldbook}. We now state a
strengthened version \cite{froyland_lloyd_quas} of the
Multiplicative Ergodic Theorem that we require for our current
purposes.
\begin{theorem}[\cite{froyland_lloyd_quas}]
\label{thm:main} Let $\sigma$ be an invertible ergodic
measure-preserving transformation of the space
$(\Omega,\mathcal{H},\mathbb{P})$. Let $P:\Omega\to M_n(\R)$ be a
measurable family of matrices satisfying
$$\int \log^+\|P(\omega)\|\,\mathrm{d}\mathbb{P}(\omega)<\infty.
$$
Then there exist $\lambda_1>\lambda_2>\cdots>\lambda_\ell\geq -\infty$
and 
dimensions $m_1,\ldots,m_\ell$, with $m_1+\cdots+m_\ell=n$, and a
measurable family of subspaces $W_{j}(\omega)\subseteq \R^n$ such
that for almost every $\omega\in\Omega$ the following hold:
\begin{enumerate}
\item $\dim W_{j}(\omega)=m_j$;
  \item $\R^n=\bigoplus_{j=1}^\ell W_{j}(\omega)$;
  \item $P(\omega)W_{j}(\omega)\subseteq W_{j}(\sigma\omega)$ (with
    equality if $\lambda_j>-\infty$);
  \item for all $v\in W_{j}(\omega)\setminus\{0\}$, one has
    \begin{equation*}
      \lim_{k\to\infty}
      (1/k)\log\|P(\sigma^{k-1}\omega)\cdots P(\sigma\omega)\cdot P(\omega) v\|= \lambda_j.
    \end{equation*}
  \end{enumerate}
\end{theorem}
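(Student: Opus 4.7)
The plan is to deduce this strengthened MET from Kingman's subadditive ergodic theorem, first obtaining the Lyapunov exponents (including the possible $-\infty$), then building the filtration forward in time, and finally splitting it using the invertibility of $\sigma$ to introduce a compatible backward filtration. The main novelty compared to the classical Oseledets statement is the possibility that $\lambda_\ell=-\infty$, since the matrices $P(\omega)$ arising from Ulam discretisations need not be invertible; this is where care is required.

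First I would establish existence of the top exponent. Define $f_k(\omega)=\log\|P^{(k)}(\omega)\|$. Submultiplicativity of the operator norm together with the cocycle relation $P^{(k+m)}(\omega)=P^{(m)}(\sigma^k\omega)P^{(k)}(\omega)$ yields subadditivity $f_{k+m}(\omega)\le f_k(\omega)+f_m(\sigma^k\omega)$, while $\int\log^+\|P\|\,d\mathbb{P}<\infty$ gives $f_1^+\in L^1$. Kingman's theorem then produces $\lambda_1=\lim_{k\to\infty}(1/k)f_k(\omega)\in[-\infty,\infty)$ almost surely. Applying the same reasoning to the cocycle induced on exterior powers, $\wedge^p P(\omega)$ acting on $\wedge^p\mathbb{R}^n$, yields almost-sure limits $\Lambda_p=\lim(1/k)\log\|\wedge^p P^{(k)}(\omega)\|$, and the Lyapunov exponents with multiplicities are extracted by the usual formula $\lambda_1+\cdots+\lambda_p=\Lambda_p$ (allowing $-\infty$ once we exhaust the finite exponents). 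Collecting equal values and counting multiplicities produces the distinct $\lambda_1>\cdots>\lambda_\ell\ge-\infty$ and the dimensions $m_j$ with $\sum m_j=n$.

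Next I would build the forward filtration. For each $j$ set
\begin{equation*}
V_j(\omega)=\Bigl\{v\in\mathbb{R}^n:\limsup_{k\to\infty}\tfrac{1}{k}\log\|P^{(k)}(\omega)v\|\le\lambda_j\Bigr\},
\end{equation*}
with the convention that $v$ landing in the eventual kernel contributes $-\infty$. Standard arguments using singular value decompositions of $P^{(k)}(\omega)$ and the exterior-power exponents obtained above show that $V_j(\omega)\supsetneq V_{j+1}(\omega)$, that $\dim V_j(\omega)=m_j+m_{j+1}+\cdots+m_\ell$, and that $V_j$ is $\mathbb{P}$-measurable as a Grassmannian-valued map, with $P(\omega)V_j(\omega)\subseteq V_j(\sigma\omega)$. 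In particular $V_\ell(\omega)$ is precisely the ``fast decay'' subspace, containing the eventual kernel bundle $K(\omega)=\bigcup_k\ker P^{(k)}(\omega)$ when $\lambda_\ell=-\infty$.

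The final and most delicate step is promoting the filtration to a direct-sum decomposition $\mathbb{R}^n=\bigoplus W_j(\omega)$. Here I would exploit invertibility of $\sigma$: running the subadditive argument on the time-reversed cocycle $P(\sigma^{-1}\omega),P(\sigma^{-2}\omega),\ldots$, restricted to the quotient by $K(\omega)$ where the backward dynamics makes sense, produces a decreasing backward filtration $\widehat V_j(\omega)$ with $\dim\widehat V_j=m_1+\cdots+m_j$. One then defines $W_j(\omega)=V_j(\omega)\cap\widehat V_j(\omega)$ and checks transversality and equivariance; $P(\omega)W_j(\omega)\subseteq W_j(\sigma\omega)$ with equality precisely when $\lambda_j>-\infty$, while for $j=\ell$ with $\lambda_\ell=-\infty$ the inclusion may be strict because $P(\omega)$ can collapse part of $W_\ell(\omega)$ into the next iterate's kernel. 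The hard part will be executing this last step cleanly when $P(\omega)$ is singular: one must isolate the kernel bundle, show it is measurable and has almost-surely constant dimension $\le m_\ell$ by ergodicity, factor it out, and then invoke the classical Oseledets splitting argument on the quotient cocycle — being careful that the backward filtration is only defined on this quotient. Measurability of $W_j(\omega)$ in the Grassmannian follows from writing each as an intersection of measurable subspace-valued maps, which is standard once the filtrations are in hand.
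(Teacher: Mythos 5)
The paper itself gives no proof of this theorem: it is quoted verbatim from \cite{froyland_lloyd_quas}, and the only clue to the cited argument is the remark preceding Algorithm \ref{discreteeigenmodealg} that it is ``based on the push-forward limit argument developed in the proof of Theorem \ref{thm:main}.'' That is, the cited proof constructs $W_j(\omega)$ as the limit of $P^{(N)}(\sigma^{-N}\omega)\,U_j(\sigma^{-N}\omega)$ where $U_j$ are the eigenspaces of the one-sided MET's limiting matrix $B(\sigma^{-N}\omega)$, so it builds the splitting by pushing singular-value subspaces forward along the cocycle, whereas you build it by intersecting a forward flag with a backward flag. These are genuinely different routes to the splitting.

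Concerning your own route, the first two stages (Kingman plus exterior powers, then the forward filtration $V_j(\omega)$) are standard and fine, but the final step has a real gap. You construct the backward filtration only on the quotient $Q(\omega)=\R^n/K(\omega)$, so $\widehat V_j(\omega)$ is a subspace of $Q(\omega)$, not of $\R^n$, and the expression $W_j(\omega)=V_j(\omega)\cap\widehat V_j(\omega)$ does not parse. If you replace $\widehat V_j(\omega)$ by its preimage $\pi_\omega^{-1}(\widehat V_j(\omega))$ the intersection is inflated by (part of) $K(\omega)$ and the expected dimension count $\dim V_j+\dim\widehat V_j-n=m_j$ fails by $\dim K(\omega)$; if instead you choose a complement to $K(\omega)$ and identify it with $Q(\omega)$, the lift is non-canonical and there is no reason the resulting $W_j$ are $P$-equivariant. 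A further unaddressed point is that the Lyapunov spectrum of the quotient cocycle need not a priori coincide with the finite part of the spectrum of $P$: the quotient norm $\|\bar v\|=d(v,K(\omega))$ can decay faster than $\|P^{(k)}(\omega)v\|$ if the iterates approach the kernel bundle without entering it, so one must separately prove that the quotient exponents are $\lambda_1,\dots,\lambda_{\ell'}$ with the claimed multiplicities. You flag this stage yourself as ``the hard part,'' and rightly so; as written it is a sketch of intent rather than an argument that closes.
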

The subspaces $W_j(\omega)$ are the general time-dependent analogues
of the vectors $w_2$ and $w_2(\sigma^k\omega), k=0,1,2$ of Examples
\ref{singlemapeg} and \ref{periodiceg}, respectively. We may
explicitly construct a slowest decaying discrete Oseledets
function as $f_2(\omega):=\sum_{i=1}^n w_{2,i}(\omega)\chi_{B_i}$, where
$w_2(\omega)\in W_2(\omega)$. In the sequel, for brevity we will
often call $W_j(\omega)$ a subspace or a function, recognising its
dual roles.

\begin{remark}
\label{remark4a} We remark that if $\ell\ge 2$, $m_2=1$, and
$\lambda_2>-\infty$, the family of vectors
$\{f_2(\sigma^k\omega)\}_{k\ge 0}$ is the \emph{unique}\footnote{Assume
there is another family $\{w'_2(\sigma^k\omega)\}_{k\ge 0}\neq
\{w_2(\sigma^k\omega)\}_{k\ge 0}$ (up to scalar multiples) with
these properties.  Then $w'_2(\sigma^k\omega)=\sum_{j=2}^\ell
\alpha_{k,j} w_\ell(\sigma^k\omega)$ for some $\alpha_{k,j}$, $j=2,\ldots,\ell$,  with
$\alpha_{k,2}\neq 0$.  WLOG assume $\alpha_{k,2}, \alpha_{k,j'}\neq
0$ for some $2<j'\le \ell$ and all $k\ge 0$, but that $\alpha_{k,j}=0$
for all $j\neq 2,j'$ and all $k\ge 0$.  Then $m_2\ge 2$ in Theorem
\ref{thm:main}, a contradiction.} (up to scalar multiples) family of
vectors in $\mathcal{B}_n$ with the properties that
\begin{enumerate}
\item $\lim_{k'\to\infty}
      (1/k')\log\|P^{(k')}(\omega)f_2(\sigma^k\omega)\|_1=\lambda_2$, $k\ge 0$,
      \item $\pi_n\mathcal{P}_\omega
      f_2(\sigma^k\omega)=\alpha_k
      f_2(\sigma^{k+1}\omega)$ for some $\alpha_k\neq 0$, $k\ge 0$.
\end{enumerate}
\end{remark}

\begin{remark}
Theorem \ref{thm:main} strengthens the standard version of the MET
for one-sided time with noninvertible matrices (see e.g.\  \cite[Theorem
3.4.1]{arnoldbook}) to obtain the conclusions of the two-sided
time MET with invertible matrices (see e.g.\
\cite[Theorem 3.4.11]{arnoldbook}). In \cite[Theorem 3.4.1]{arnoldbook}, the
existence of only a \textit{flag}
$\mathbb{R}^n=V_1(\omega)\supset\cdots\supset V_\ell(\omega)$ of
Oseledets subspaces is guaranteed, while in
\cite[Theorem 3.4.11]{arnoldbook}, the existence of a \textit{splitting}
$W_1(\omega)\oplus\cdots\oplus W_\ell(\omega)=\mathbb{R}^n$ is
guaranteed. Theorem \ref{thm:main} above demonstrates existence of
an Oseledets \textit{splitting} for two-sided time with
\textit{noninvertible} matrices. This is particularly important for
our intended application as the projected Perron--Frobenius operator
matrices are non-invertible.
Recent further extensions \cite{froyland_lloyd_quas2} prove existence and uniqueness of Oseledets subpsaces for cocycles of Lasota-Yorke maps.
\end{remark}

\section{Numerical approximation of Oseledets functions}

In the autonomous and periodic settings we have seen in Examples
\ref{singlemapeg} and \ref{periodiceg} that the subspaces
$W_2(\omega)=\sp\{w_2(\omega)\}$ were one-dimensional, and that the
vectors $w_2(\omega)$ could be simply determined as eigenvectors of
matrices.
%
For truly nonautonomous systems (those that are aperiodically
driven), the Oseledets splittings are difficult to compute. In this
section we outline a numerical algorithm to approximate the
$W_j(\omega)$ subspaces from Theorem \ref{thm:main}. The algorithm
is based on the push-forward limit argument developed in the proof
of Theorem \ref{thm:main}. To streamline notation, we describe the
discrete time and continuous time setting separately.

\subsection{Discrete time}

We first describe a simple and efficient method to construct the
matrix $P(\omega)$ defined in (\ref{discreteulam}).

\begin{algorithm}[Approximation of $P(\sigma^{-k}\omega)_{ij}$, $0\le k\le N$]
 \quad
\begin{enumerate}
 \item Partition the state space $M$ into a collection of connected sets $\{B_1,\ldots,B_n\}$ of small diameter.
\item Fix $i$, $j$, and $k$ and create a set of $Q$ test points $x_{j,1},\ldots,x_{j,Q}\in B_j$ that are uniformly distributed over $B_j$.
\item For each $q=1,\ldots,Q$ calculate $y_{j,q}=T_{\sigma^{-k}\omega}x_{j,q}$.
\item Set
\begin{equation}
 \label{discreteapproxulam}
P(\sigma^{-k}\omega)_{ij}=\frac{\#\{q:y_{j,q}\in B_i\}}{Q}
\end{equation}
\end{enumerate}
\end{algorithm}
We now describe how to use the matrices $P(\omega)$ to approximate
the subspaces $W_j(\omega)$. An intuitive description of the ideas behind Algorithm \ref{discreteeigenmodealg} immediately follows the algorithm statement.

\begin{algorithm}[Approximation of Oseledets subspaces
  $W_j(\omega)$ at $\omega\in\Omega$.]
  \label{discreteeigenmodealg}
\quad
 \begin{enumerate}
\item Construct the Ulam matrices $P^{(M)}(\sigma^{-N}\omega)$ and $P^{(N)}(\sigma^{-N}\omega)$ from (\ref{discreteapproxulam}) and (\ref{discretematrixcocycle}) for suitable $M$ and $N$.  The number $M$ represents the number of iterates over which one measures the decay, while the number $N$ represents how many iterates the resulting ``initial vectors'' are pushed forward to better approximate
elements of the $W_j(\omega)$.
\item Form $$
\Psi^{(M)}(\sigma^{-N}\omega):=(P^{(M)}(\sigma^{-N}\omega)^\top
P^{(M)}(\sigma^{-N}\omega))^{1/2M}
$$ as an approximation to the standard limiting matrix
$$
B(\sigma^{-N}\omega):=\lim_{M\to\infty}
\left(P^{(M)}(\sigma^{-N}\omega)^\top
  P^{(M)}(\sigma^{-N}\omega)\right)^{1/2M}
$$
appearing in the Multiplicative Ergodic Theorem (see e.g.\  \cite[Theorem
3.4.1(i)]{arnoldbook}).
\item Calculate the orthonormal eigenspace decomposition of
$\Psi^{(M)}(\sigma^{-N}\omega)$,
 denoted by $U^{(M)}_j(\sigma^{-N}\omega)$, $j=1,\ldots,\ell$.
We are particularly interested in low values of $j$, corresponding
to large eigenvalues $L_j$.
\item Define
  $W_j^{(M,N)}(\omega):=P^{(N)}(\sigma^{-N}\omega)U^{(M)}_j(\sigma^{-N}\omega)$
  via the push forward under the matrix cocycle.
\item $W_j^{(M,N)}(\omega)$ is our numerical approximation to $W_j(\omega)$.
\end{enumerate}
\end{algorithm}
Here is the idea behind the above algorithm. If we choose $M$ large
enough, the eigenspace $U^{(M)}_j(\sigma^{-N}\omega)$ should be
close to the limiting ($M\to\infty$) eigenspace
$U_j(\sigma^{-N}\omega)$. Vectors in the eigenspace
$U_j(\sigma^{-N}\omega)$ experience stretching at a rate close to
$L_j$.  Note that the eigenspace $U^{(M)}_j(\sigma^{-N}\omega)$ is the $j^{\rm th}$ singular vector of the matrix $P^{(M)}(\sigma^{-N}\omega)$, which experiences a ``per unit time'' average stretching from time $-N$ to $-N+M$ of $L_j$.
Choose some arbitrary $v\in U_j(\sigma^{-N}\omega)$ and
write $v=\sum_{j'=j}^\ell w_{j'}$ with $w_{j'}\in
W_{j'}(\sigma^{-N}\omega)$. Pushing forward by
$P^{(N)}(\sigma^{-N}\omega)$ for large enough $N$ will result in
$\|P^{(N)}(\sigma^{-N}\omega)w_j\|$ dominating
$\|P^{(N)}(\sigma^{-N}\omega)w_{j'}\|$ for $j<j'\le \ell$. Thus, for
large $M$ and $N$ we expect $W_j^{(M,N)}(\omega)$ to be close to
$W_j(\omega)$.

\begin{remarks}
\quad
\begin{enumerate}
\item Theorem \ref{thm:main} states that $W_j^{(\infty,N)}(\omega)\to W_j(\omega)$ as
$N\to\infty$.
\item This method may also be used to calculate the Oseledets
  subspaces for two-sided linear cocycles, and may be more convenient, especially for large $n$, than
  the standard method of intersecting the relevant subspaces of flags
  of the forward and backward cocycles.
 \end{enumerate}
\end{remarks}

The numerical approximation of the Oseledets subspaces has been
considered by a variety of authors in the context of (usually
invertible) nonlinear differentiable dynamical systems, where the
linear cocycle is generated by Jacobian matrices concatenated along
trajectories of the nonlinear system. Froyland \emph{et al.}
\cite{Froyland_Judd_Mees_95} approximate the Oseledets subspaces in
invertible two-dimensional systems by multiplying a randomly chosen
vector by $P^{(N)}(\sigma^{-N}\omega)$ (pushing forward) or
$P^{(-N)}(\sigma^{N}\omega)$ (pulling back, where $P^{(-N)}(\sigma^{N}\omega)=P^{-1}(\omega)\cdots P^{-1}(\sigma^{N-1}\omega)$. Trevisan and Pancotti
\cite{trevisan_pancotti_98} calculate eigenvectors of
$\Psi^{(M)}(\omega)$ for the three-dimensional Lorenz flow,
increasing $M$ until numerical convergence of the eigenvectors is
observed. Ershov and Potapov \cite{ershov_potapov_98} use an
approach similar to ours, combining eigenvectors of a $\Psi^{(M)}$
with pushing forward under $P^{(N)}$. Ginelli \emph{et al.}
\cite{ginelli_etal_07} embed the approach of
\cite{Froyland_Judd_Mees_95} in a QR-decomposition methodology to
estimate the Oseledets vectors in higher dimensions. In the
numerical experiments that follow, we have found our approach to
work very well, with fast convergence in terms of both $M$ and $N$.

\subsection{Continuous time}

As our practical computations are necessarily over finite time intervals, from now on, when dealing with continuous time systems, we will compute $P^{(k)}(z)$ as $\pi_n\mathcal{P}_z^{(k)}$ rather than as $P(\xi(k-1,z))\cdots P(\xi(1,z))\cdot P(z)$.
If the computation of $\pi_n\mathcal{P}_z^{(k)}$ can be done
accurately (this will be discussed further in Section
\ref{ctscomputationssect}), then this representation should be
closer to $\mathcal{P}_z^{(k)}$ as there are fewer applications of
$\pi_n$.


We first describe a simple and efficient method to construct the matrix $P(\omega)$ defined in (\ref{discreteulam}).
\begin{algorithm}[Approximation of $P^{(M)}(\xi(-N,z))$, $N\ge 0$]
 \label{ctsPalg}
 \quad
\begin{enumerate}
 \item Partition the state space $M$ into a collection of connected sets $\{B_1,\ldots,B_n\}$ of small diameter.
\item Fix $i$,$j$, and $z$ and create a set of $Q$ test points $x_{j,1},\ldots,x_{j,Q}\in B_j$ that are uniformly distributed over $B_j$.
\item For each $q=1,\ldots,Q$ calculate $y_{j,q}=\phi(M,\xi(-N,z),x_{j,q})$.
\item Set
\begin{equation}
 \label{continuousapproxulam}
P^{(M)}(\xi(-N,z))_{ij}=\frac{\#\{q:y_{j,q}\in B_i\}}{Q}
\end{equation}
\end{enumerate}
\end{algorithm}

The flow time $M$ should be chosen long enough so that most test
points leave their partition set of origin, otherwise at the
resolution given by the partition $\{B_1,\ldots,B_n\}$, the matrix
$P^{(M)}(\xi(-N,z))$ matrix will be too close to the $n\times n$
identity matrix.  If the action of $\phi$ separates nearby points,
as is the case for chaotic systems, clearly the longer the flow
duration $M$, the greater $Q$ should be in order to maintain a good
representation of the images $\phi(M,\xi(-N,z),B_i)$ by the test points.

\begin{algorithm}[Approximation of Oseledets subspaces
  $W_j(z)$ at $z\in\Xi$.]
  \label{ctseigenmodealg}
\quad
\begin{enumerate}
\item Construct the Ulam matrices $P^{(M)}(\xi(-N,z))$ and $P^{(N)}(\xi(-N,z))$ from (\ref{continuousapproxulam}) for suitable $M$ and $N$.  The number $M$ represents the flow duration over which rate of decay is measured, while the number $N$ represents the duration over which the resulting ``initial vectors'' are pushed forward to better approximate elements of the $W_j(z)$.
\item Form $$
\Psi^{(M)}({\xi(-N,z)}):=\left(P^{(M)}(\xi(-N,z))^\top
P^{(M)}(\xi(-N,z))\right)^{1/2M}
$$ as an approximation to the standard limiting matrix
$$
B(\xi(-N,z)):=\lim_{M\to\infty} \left(P^{(M)}(\xi(-N,z))^\top
P^{(M)}(\xi(-N,z))\right)^{1/2M}$$ appearing in the Multiplicative Ergodic
Theorem (see e.g.\ \cite[Theorem 3.4.1(i)]{arnoldbook}).
\item Calculate the orthonormal eigenspace decomposition of
$\Psi^{(M)}(\xi(-N,z))$,
 denoted by $U^{(M)}_{j}(\xi(-N,z))$, $j=1,\ldots,\ell$. We are particularly interested in low values of $j$, corresponding to large eigenvalues $L_j$.
\item Define
  $W_{j}^{(M,N)}(z):=P^{(N)}(\xi(-N,z))U^{(M)}_{j}(\xi(-N,z))$
  via the push forward under the matrix cocycle.
\item $W_{j}^{(M,N)}(z)$ is our numerical approximation to $W_{j}(z)$.
\end{enumerate}
\end{algorithm}

\subsection{Continuity of the Oseledets subspaces in continuous
time} When treating continuous time systems, one may ask about the
continuity properties of $W_j^{(M,N)}(z)$ in $z$. In the following
we suppose that $W^{(M,N)}_2(z)$ is one-dimensional. For large $M$
and $N$, $W^{(M,N)}_2(z)$ will approximate the most dominant Oseledets subspace at time $z$. Suppose that we are interested in how this
subspace changes from time $z$ to time $\xi(\delta,z)$ for small
$\delta>0$. There are two ways to obtain information at time
$\xi(\delta,z)$. Firstly, we can simply push forward $W^{(M,N)}_2(z)$
slightly longer to produce $W^{(M,N+\delta)}_2(\xi(\delta,z))$. Secondly, we can
compute $\Psi^{(M)}$ slightly later at time $\xi(\delta,z)$ to produce
$W^{(M,N)}_2(\xi(\delta,z))$.

To compare the closeness of $W^{(M,N)}_2(z)$ to
$W^{(M,N+\delta)}_2(\xi(\delta,z))$ and $W^{(M,N)}_2(\xi(\delta,z))$, we represent
each as a function and make a comparison in the $L^1$ norm. We
assume that $U^{(M)}_2(\xi(-N,z))$ is one-dimensional
and define
$f_{n,\xi(-N,z),M}=\sum_{i=1}^n (u^{(M)}_2(\xi(-N,z)))_i\chi_{B_i}\in L^1(M,m)$
where $u^{(M)}_2(\xi(-N,z))\in U^{(M)}_2(\xi(-N,z))$ is scaled so that
$\|f_{n,\xi(-N,z),M}\|_1=1$. Let
$\hat{f}_{n,z,M,N}=\pi_n\mathcal{P}^{(N)}_{\xi(-N,z)}f_{n,\xi(-N,z),M}$. Note
that $\hat{f}_{n,z,M,N}=\sum_{i=1}^n (w^{(M,N)}_2(z))_i\chi_{B_i}$
for some $w^{(M,N)}_2(z)\in W^{(M,N)}_2(z)$.

We firstly compare $\hat{f}_{n,\xi(\delta,z),M,N+\delta}$ and $\hat{f}_{n,z,M,N}$.
\begin{proposition}
$\|\hat{f}_{n,\xi(\delta,z),M,N+\delta}-\hat{f}_{n,z,M,N}\|_1\to 0$ as $\delta\to 0$.
\end{proposition}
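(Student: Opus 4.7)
The plan is to show that both $\hat{f}_{n,z,M,N}$ and $\hat{f}_{n,\xi(\delta,z),M,N+\delta}$ arise as $\pi_n$ applied to a common $L^1$ function, the second composed additionally with the small-time Perron--Frobenius operator $\mathcal{P}^{(\delta)}_z$, and then to invoke strong continuity of the Perron--Frobenius semigroup at $\delta=0$.

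First I would unfold the definition of $\hat{f}_{n,\xi(\delta,z),M,N+\delta}$ and use the flow identity $\xi(-(N+\delta),\xi(\delta,z))=\xi(-N,z)$ to rewrite it as $\pi_n\mathcal{P}^{(N+\delta)}_{\xi(-N,z)}f_{n,\xi(-N,z),M}$; crucially, the ``initial'' function $f_{n,\xi(-N,z),M}$ is then the same as the one appearing in $\hat{f}_{n,z,M,N}$. Next, the semigroup property of Section 2.2 (taking $t_1=N$, $t_2=\delta$, based at $\xi(-N,z)$, and using $\xi(N,\xi(-N,z))=z$) gives
$$
\mathcal{P}^{(N+\delta)}_{\xi(-N,z)}=\mathcal{P}^{(\delta)}_z\,\mathcal{P}^{(N)}_{\xi(-N,z)}.
$$
Setting $g:=\mathcal{P}^{(N)}_{\xi(-N,z)}f_{n,\xi(-N,z),M}\in L^1(M,m)$, we obtain $\hat{f}_{n,z,M,N}=\pi_n g$ and $\hat{f}_{n,\xi(\delta,z),M,N+\delta}=\pi_n\mathcal{P}^{(\delta)}_z g$. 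Since the Ulam projection $\pi_n$ defined in (\ref{pineqn}) is a contraction on $L^1(M,m)$ (triangle inequality applied cell by cell),
$$
\|\hat{f}_{n,\xi(\delta,z),M,N+\delta}-\hat{f}_{n,z,M,N}\|_1\le\|\mathcal{P}^{(\delta)}_z g-g\|_1,
$$
and the task reduces to proving $\|\mathcal{P}^{(\delta)}_z g-g\|_1\to 0$ as $\delta\to 0$.

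To finish, I would establish strong $L^1$-continuity of $\delta\mapsto\mathcal{P}^{(\delta)}_z g$ at $\delta=0$. For a continuous test function $h$, the explicit formula $\mathcal{P}^{(\delta)}_z h(x)=h(\phi(-\delta,\xi(\delta,z),x))\cdot|\det D\phi(-\delta,\xi(\delta,z),x)|$ combined with continuity of $\phi$ and of its Jacobian jointly in $(\delta,x)$, the compactness of $M$, and the initial conditions $\phi(0,z,\cdot)=\Id$ and unit Jacobian at $\delta=0$, gives $\mathcal{P}^{(\delta)}_z h\to h$ uniformly on $M$, hence in $L^1$. A standard $3\varepsilon$-argument using density of $C(M)$ in $L^1(M,m)$ together with the $L^1$-contractivity of each $\mathcal{P}^{(\delta)}_z$ then extends the convergence to arbitrary $g\in L^1(M,m)$. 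The main obstacle is conceptual rather than technical: one must spot that, once the flow and semigroup identities are used, the two objects share an identical ``inner'' function $g$, after which the proposition collapses to a textbook strong-continuity statement for one-parameter Perron--Frobenius families generated by smooth flows on a compact manifold.
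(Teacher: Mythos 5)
Your proposal is correct and follows the same route as the paper: rewrite both functions as $\pi_n$ applied to a common initial function $f_{n,\xi(-N,z),M}$ pushed forward by $\mathcal{P}^{(N)}_{\xi(-N,z)}$ and $\mathcal{P}^{(N+\delta)}_{\xi(-N,z)}$ respectively, use $\|\pi_n\|_1=1$, and reduce to strong $L^1$-continuity of the Perron--Frobenius semigroup at $\delta=0$ (the paper's Lemma~\ref{basicctylemma}). You simply make explicit the semigroup factorisation $\mathcal{P}^{(N+\delta)}_{\xi(-N,z)}=\mathcal{P}^{(\delta)}_z\mathcal{P}^{(N)}_{\xi(-N,z)}$ that the paper leaves implicit.
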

\begin{proof}
Note that
$\hat{f}_{n,\xi(\delta,z),M,N+\delta}=\pi_n\mathcal{P}^{(N+\delta)}_{\xi(-N,z)}f_{n,\xi(-N,z),M}$
while $\hat{f}_{n,z,N,M}=\pi_n\mathcal{P}^{(N)}_{\xi(-N,z)}f_{n,\xi(-N,z),M}$.
The proof will follow from the result that
$\mathcal{P}_{z}^{(\tau)}$ is a \textit{continuous} semigroup;  that
is, $\lim_{\delta\to 0}\|\mathcal{P}_t^{(\delta)}f-f\|_1=0$ for all
$t\in \mathbb{R}$, $f\in L^1(M,m)$.
\begin{lemma}
\label{basicctylemma} $\|\mathcal{P}_{z}^{(\delta)}f-f\|_1\to 0$ as
$\delta\to 0$ for all $z\in \Xi$ and $f\in L^1$.
\end{lemma}
\begin{proof}
The proof runs as a non-autonomous version of the discussion in
Remark 7.6.2 \cite{lasota_mackey1}. Note that
$\mathcal{P}_{z}^{(\delta)}f(x)=f(\phi(-\delta,\xi(\delta,z),x))\cdot\det
D\phi(-\delta,\xi(\delta,z),x)$, where $\phi(-\delta,\xi(\delta,z),\cdot)$
denotes the flow from $\xi(\delta,z)$ in reverse time for duration
$\delta$. For the moment consider continuous $f$. Since $x\mapsto\phi(s,z,x)$ is at
least $C^1$ for each $s,z$ (the derivative of $\phi$ wrt to $x$ is continuous with respect to $s$ and $x$ for each fixed $z$) by \cite[Theorem 2.2.2 (iv)]{arnoldbook} and $M$ is compact, $\mathcal{P}_{z}^{(\delta)}f(x)\to f(x)$ uniformly in $x$ as $\delta\to 0$.
Thus $\|\mathcal{P}_{z}^{(\delta)}f-f\|_1\to 0$ as $\delta\to 0$.
Since the continuous functions are dense in $L^p$, $1\le p<\infty$
as $M$ is compact (see e.g.\ \cite{dunford_schwartz} Lemma IV.8.19),
one can $L^1$ approximate any $L^1$ $f$ by a continuous function and
thus the result holds for all $L^1$ functions $f$.
\end{proof}
Thus the result follows using Lemma \ref{basicctylemma} and the fact that $\|\pi_n\|_1=1$.
\end{proof}

Now, let's compare $\hat{f}_{n,\xi(\delta,z),M,N}$ and $\hat{f}_{n,z,M,N}$.
\begin{proposition}
$\|\hat{f}_{n,\xi(\delta,z),M,N}-\hat{f}_{n,z,M,N}\|_1\to 0$ as $\delta\to 0$.
\end{proposition}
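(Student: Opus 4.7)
The plan is to decompose the difference by inserting $\pi_n \mathcal{P}^{(N)}_{\xi(\delta-N,z)} g$ in the middle, where I set $g := f_{n,\xi(-N,z),M}$ and $g_\delta := f_{n,\xi(\delta-N,z),M}$, so that
\begin{equation*}
\hat{f}_{n,\xi(\delta,z),M,N} - \hat{f}_{n,z,M,N} = \pi_n \mathcal{P}^{(N)}_{\xi(\delta-N,z)}(g_\delta - g) + \pi_n\bigl[\mathcal{P}^{(N)}_{\xi(\delta-N,z)} - \mathcal{P}^{(N)}_{\xi(-N,z)}\bigr]g.
\end{equation*}
Since $\pi_n$ and every Perron--Frobenius operator are Markov, hence $L^1$-contractive, it will be enough to show separately that $\|g_\delta - g\|_1 \to 0$ and that $\|[\mathcal{P}^{(N)}_{\xi(\delta-N,z)} - \mathcal{P}^{(N)}_{\xi(-N,z)}]g\|_1 \to 0$ as $\delta \to 0$.

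For the first piece I will argue that the coefficient vectors defining $g$ and $g_\delta$ vary continuously with the base point. The functions $g$ and $g_\delta$ are simple functions on the partition $\{B_1,\ldots,B_n\}$ whose $B_i$-coefficients are the entries of the normalized eigenvector $u^{(M)}_2(\cdot)$ of $\Psi^{(M)}(\cdot)$. Smooth dependence of the flow $\phi$ on its initial time (\cite[Theorem 2.2.2]{arnoldbook}) makes the entries of $P^{(M)}(w)$, and therefore of $\Psi^{(M)}(w) = (P^{(M)}(w)^\top P^{(M)}(w))^{1/2M}$, continuous in $w$. The standing assumption that $U^{(M)}_2(\xi(-N,z))$ is one-dimensional forces the corresponding eigenvalue of $\Psi^{(M)}(\xi(-N,z))$ to be simple, so standard matrix perturbation theory yields continuity of the unit eigenvector $u^{(M)}_2(w)$ in $w$ in a neighbourhood of $\xi(-N,z)$ (after fixing a sign by taking positive inner product with the reference vector). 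Since $\xi(\delta-N,z) \to \xi(-N,z)$ as $\delta \to 0$, this gives $\|g_\delta - g\|_1 \to 0$.

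For the second piece I will use the semigroup identity, applied in two different ways,
\begin{equation*}
\mathcal{P}^{(N)}_{\xi(\delta-N,z)} \mathcal{P}^{(\delta)}_{\xi(-N,z)} = \mathcal{P}^{(N+\delta)}_{\xi(-N,z)} = \mathcal{P}^{(\delta)}_z \mathcal{P}^{(N)}_{\xi(-N,z)},
\end{equation*}
which rearranges to
\begin{equation*}
\bigl[\mathcal{P}^{(N)}_{\xi(\delta-N,z)} - \mathcal{P}^{(N)}_{\xi(-N,z)}\bigr]g = \mathcal{P}^{(N)}_{\xi(\delta-N,z)}\bigl[g - \mathcal{P}^{(\delta)}_{\xi(-N,z)} g\bigr] + \bigl[\mathcal{P}^{(\delta)}_z - I\bigr]\mathcal{P}^{(N)}_{\xi(-N,z)} g.
\end{equation*}
Both summands vanish in $L^1$ as $\delta \to 0$ by Lemma \ref{basicctylemma} (continuity of the Perron--Frobenius semigroup at $\delta = 0$), with the first summand additionally contracted by the Markov operator $\mathcal{P}^{(N)}_{\xi(\delta-N,z)}$. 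Combining both pieces completes the argument.

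The main obstacle is the continuity of $u^{(M)}_2(w)$ in $w$: the semigroup portion of the argument is essentially a non-autonomous repackaging of the previous proposition, but controlling the eigenvector requires both continuity of the matrix $\Psi^{(M)}(\cdot)$ (obtained from smoothness of $\phi$) \emph{and} simplicity of the relevant eigenvalue, which is exactly what the one-dimensionality hypothesis on $U^{(M)}_2$ provides. Without simplicity, the eigenspace could jump under arbitrarily small perturbations of $w$, and the proposition would fail.
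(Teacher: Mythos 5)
Your proof is correct and rests on the same two ingredients the paper uses (continuity of $\Psi^{(M)}$ plus matrix perturbation theory, and the $L^1$-semigroup continuity of Lemma \ref{basicctylemma}), but it is organized more carefully. The paper's proof establishes $\|f_{n,\xi(-N,z),M}-f_{n,\xi(\delta-N,z),M}\|_1\to 0$ and then closes by ``pushing both forward by $\pi_n\mathcal{P}^{(N)}_{\xi(-N,z)}$''; that last sentence glosses over the fact that, by definition, $\hat{f}_{n,\xi(\delta,z),M,N}$ is obtained by pushing $f_{n,\xi(\delta-N,z),M}$ forward by the \emph{different} operator $\pi_n\mathcal{P}^{(N)}_{\xi(\delta-N,z)}$. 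Your insertion of the cross term $\pi_n\mathcal{P}^{(N)}_{\xi(\delta-N,z)}g$ and the resulting split into $\pi_n\mathcal{P}^{(N)}_{\xi(\delta-N,z)}(g_\delta-g)$ plus $\pi_n[\mathcal{P}^{(N)}_{\xi(\delta-N,z)}-\mathcal{P}^{(N)}_{\xi(-N,z)}]g$ is exactly what is needed to make the argument honest, and your semigroup rearrangement for the second piece mirrors the technique the paper already uses in its eigenvector-closeness estimate and in Proposition 1. One small stylistic divergence: for continuity of the entries of $P^{(M)}(\cdot)$ you appeal directly to smooth dependence of $\phi$, whereas the paper derives $\|P^{(M)}(\xi(-N+\delta,z))-P^{(M)}(\xi(-N,z))\|\to 0$ from the $L^1$-estimate of Lemma \ref{basicctylemma} applied to $\pi_n\mathcal{P}^{(M)}_{\cdot}f$ for each basis element; the latter is marginally cleaner because the Ulam entries are literally the coefficients of $\pi_n\mathcal{P}^{(M)}$ on the basis $\{\chi_{B_i}/m(B_i)\}$, so the operator estimate translates verbatim to the matrix. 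Both routes are fine, and your explicit appeal to simplicity of the eigenvalue as the hypothesis enabling perturbation theory is a useful clarification of what the one-dimensionality assumption on $U^{(M)}_2$ is really doing.
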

\begin{proof}
This result is more difficult to demonstrate as we need to firstly
compare $\Psi^{(M)}(\xi(-N,z))$ with $\Psi^{(M)}(\xi(-N+\delta,z))$. To this end,
consider
\begin{eqnarray*}
\|\pi_n\mathcal{P}_{\xi(-N+\delta,z)}^{(M)}f-\pi_n\mathcal{P}_{\xi(-N,z)}^{(M)}f\|_1&=&\|\pi_n\mathcal{P}_{\xi(\delta,z)-N}^{(M)}f-\pi_n\mathcal{P}_{\xi(-N,z)}^{(M+\delta)}f+\pi_n\mathcal{P}_{\xi(-N,z)}^{(M+\delta)}f -\pi_n\mathcal{P}_{\xi(-N,z)}^{(M)}f\|_1 \\
&\le&\|\pi_n\|_1\left(\|\mathcal{P}_{\xi(-N+\delta,z)}^{(M)}f-\mathcal{P}_{\xi(-N,z)}^{(M+\delta)}f\|_1+\|\mathcal{P}_{\xi(-N,z)}^{(M+\delta)}f -\mathcal{P}_{\xi(-N,z)}^{(M)}f\|_1\right) \\
&\le&\|\mathcal{P}_{\xi(-N+\delta,z)}^{(M)}(\Id-\mathcal{P}_{\xi(-N,z)}^{(\delta)})f\|_1+\|(\mathcal{P}_{\xi(-N,z)+M}^{(\delta)}-\Id)\mathcal{P}_{\xi(-N,z)}^{(M)}f\|_1 \\
&\le&\|(\Id-\mathcal{P}_{\xi(-N,z)}^{(\delta)})f\|_1+\|(\mathcal{P}_{\xi(-N,z)+M}^{(\delta)}-\Id)\mathcal{P}_{\xi(-N,z)}^{(M)}f\|_1 \\
\end{eqnarray*}
The right hand side converges to zero as $\delta\to 0$ by Lemma
\ref{basicctylemma}. This result implies that
$\|P^{(M)}(\xi(-N,z))-P^{(M)}(\xi(-N+\delta,z))\|\to 0$ as $\delta\to 0$ in whatever
matrix norm we choose.  Thus $\|\Psi^{(M)}(\xi(-N,z))
^{2M}-\Psi^{(M)}(\xi(-N+\delta,z))^{2M}\|=\|P^{(M)}(\xi(-N,z))^\top(P^{(M)}(\xi(-N,z))-P^{(M)}(\xi(-N+\delta,z)))+(P^{(M)}(\xi(-N,z))^\top-P^{(M)}(\xi(-N+\delta,z))^\top)P^{(M)}(\xi(-N+\delta,z))\|\to
0$ as $\delta\to 0$.  By standard perturbation results, see
e.g.\ \cite[Theorem II.5.1]{kato}, this implies that eigenvectors
$U^{(M)}_2(z)$ and $U^{(M)}_2(\xi(\delta,z))$ are close for sufficiently small $\delta$.
Thus $f_{n,\xi(-N,z),M}$ and $f_{n,\xi(-N+\delta,z),M}$ are close in $L^1$ norm.
Now we need to push both of these forward by $\pi_n\mathcal{P}(\xi(-N,z))^{(N)}$.
This will not increase the norm of the difference at all, so $\|\hat{f}_{n,\xi(\delta,z),M,N}-\hat{f}_{n,z,M,N}\|_1$ will also be small.
\end{proof}

\subsection{Oseledets functions for a 1D discrete time nonautonomous system}
\label{discreteeigenmodesection}

We now examine the Oseledets functions for the system defined in
Example \ref{Discreteeg}. We consider the approximation
$\pi_{100}\mathcal{P}_{\omega}$ of rank $100$, which we obtain by
Galerkin projection. We denote by
$P(\omega)\in\R^{100}\times\R^{100}$ the Ulam matrix representing
the action of $\pi_{100}\mathcal{P}_{\omega}$ on functions
$f\in\mathcal{B}_{100}:=\sp\{\chi_{[(i-1)/100,i/100)},
i=1,\ldots,100\}$. The matrices $P(\sigma^{-k}\omega),
k=-10,\ldots,10$ are constructed by following Algorithm 1 using
$Q=100$.

We look for Oseledets functions for a particular aperiodic sequence
$\omega$. To generate an aperiodic sequence, let
$\tau\in\set{0,1}^\N$ be the binary expansion of $1/\sqrt{3}$.
Extend $\tau$ to an element of $\set{0,1}^\Z$ by setting $\tau_i=0$
for all $i\leq 0$. Define $\omega_{i-25}=1+2\tau_i+\tau_{i+1}$ for
each $i$. Then $\omega\in\Omega$ and the central $21$ terms of
$\omega$ are
\begin{equation}
 \label{specialomega}
\omega = (\ldots, 2, 3, 1, 2, 4,  4, 3, 2, 3, 1,  \dot{1},  2, 3, 2, 4, 3,  1, 2, 3, 1, 1  \ldots),
\end{equation}
where the dot denotes the zeroth term $\omega_0=1$.

We calculate the eigenvalues of $(P^{(20)}(\sigma^{-10}\omega)^\top\circ P^{(20)}(\sigma^{-10}\omega))^{1/40}$, where $P^{(20)}(\sigma^{-10}\omega)$ is defined as in (\ref{discretematrixcocycle}), and find the top three to be
$$
L_1\approx 1.00, L_2\approx 0.84, L_3\approx 0.46.
$$
As in Examples \ref{singlemapeg} and \ref{periodiceg}, the maps
$T_i$ are piecewise affine with constant slope three, and so
$\rho(\omega)=1/3$. Thus $\log L_2$ and $\log L_3$ may approximate
isolated Lyapunov exponents in $\Lambda(\mathcal{P})$.

We follow Algorithm 2 to approximate the second Oseledets subspace $W_2^{(M,N)}(\sigma^k\omega)$ for $k=0,\ldots,5$, using $(M,N)=(20,10)$, see Figure \ref{fig:NonCS}.
\begin{figure}[tb]
    \centering
    \psfrag{L0}{$k=0$}
    \psfrag{L1}{$k=1$}
    \psfrag{L2}{$k=2$}
    \psfrag{L3}{$k=3$}
    \psfrag{L4}{$k=4$}
    \psfrag{L5}{$k=5$}
        \includegraphics[width=0.90\textwidth]{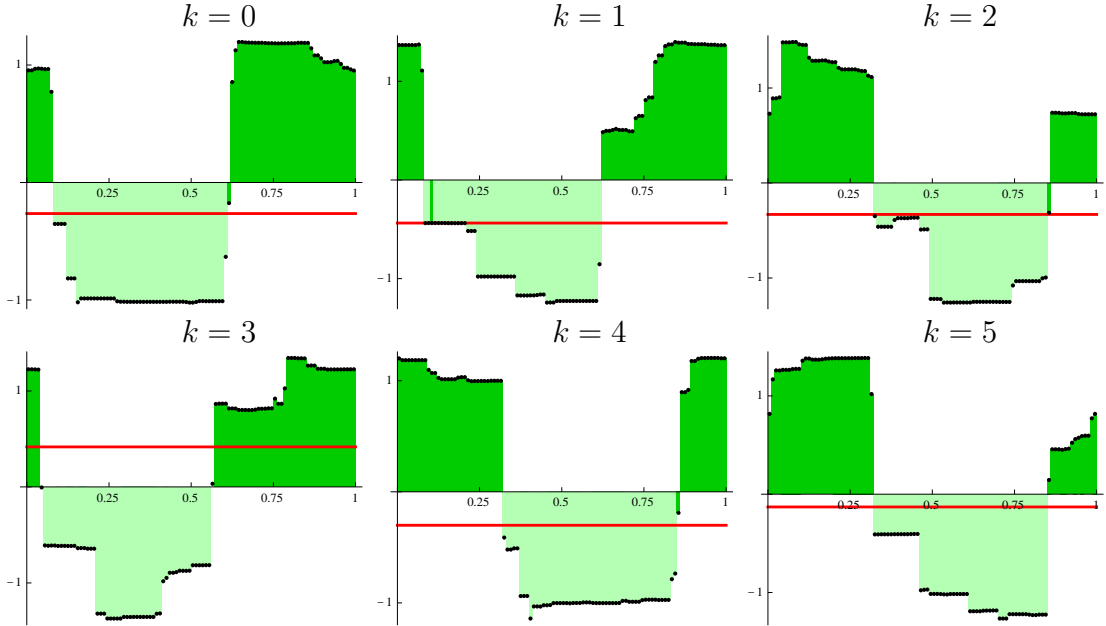}
    \caption{The Oseledets function approximations $f_2^{(M,N)}(\sigma^k\omega)$ for $M=20, N=10$, and $k=0,\ldots,5$, along with optimal thresholds (shown in dashed green), see Section \ref{1dcssection}.}
    \label{fig:NonCS}
\end{figure}
In order to confirm the effectiveness of Algorithm 2 we calculate
the $L^1$ distance $\Delta(N)$ between the normalisations of the vectors
$w_2^{(2N,N)}(\sigma\omega)$ and $P(\omega) w_2^{(2N,N)}(\omega)$,
for $N=2,\ldots,19$ with $M=40$. By property 3 of Theorem \ref{thm:main} this distance should be small if the family $W_2(\omega)$ is well approximated. A logarithmic plot of $\Delta(N)$ against $N$,
see Figure \ref{fig:logplot}, shows the fast convergence of
$w_2^{(2N,N)}(\omega)$ to an Oseledets subspace.
\begin{figure}[bt]
   \centering
   \psfrag{a}{$N$}
   \psfrag{b}{$\Delta(N)$}
       \includegraphics[width=0.70\textwidth]{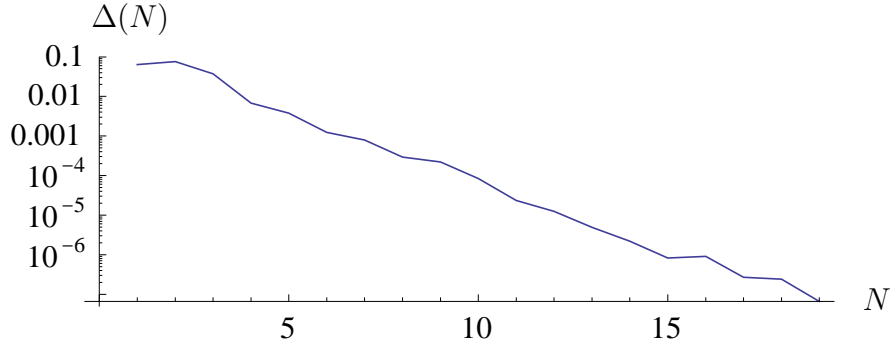}
   \caption{A graph showing $\Delta(N)$ for $N=1,\ldots,19$.}
   \label{fig:logplot}
\end{figure}
In Section \ref{1dcssection} we will see how to extract coherent
sets from these functions.

%
%
%

\subsection{Oseledets functions in a 2D continuous time nonautonomous system}
\label{ctscomputationssect}

We consider the following nonautonomous system on
$M=[0,2\pi]\times[0,\pi]$, $t\in\mathbb{R}^+$:
\begin{equation}\label{eq::travellingwave}
\begin{split}
\dot{x} &=  c-A\sin(x-\nu t)\cos(y)\qquad\mod{2\pi}\\
\dot{y} &=  A\cos(x-\nu t)\sin(y)\\
\end{split}
\end{equation}
This equation describes a travelling wave in a stationary frame of
reference with rigid boundaries at $y=0$ and $y=\pi$, where the
normal flow vanishes~\cite{Pierrehumbert91,SamelsonWiggins}. The
streamfunction (Hamiltonian) of this system is given by
\begin{equation}\label{eq::wavehamiltonian}
s(x,y,t)=-cy+A\sin(x-\nu t)\sin(y).
\end{equation}
We set $c=0.5$, $A=1$, and the phase speed to $\nu=0.25$. The
velocity field is $2\pi$-periodic in the $x$-direction, which allow
us to study the flow on a cylinder. The velocity fields in a
comoving frame for these parameters are shown in
Figure~\ref{fig::vectorfield_nodisturb}. The closed recirculation
regions adjacent to the walls ($y=0$ and $y=\pi$) move in the positive
$x$-direction and are separated from the jet flowing regime by the
heteroclinic loops of fixed points, which are given below.
\par
This model can be simplified to an autonomous system with a steady
streamfunction in the comoving frame by setting $X=x-\nu t$ and
$Y=y$. The steady streamfunction is then given by
$S(X,Y,t)=-(c-\nu)Y+A\sin(X)\sin(Y)$. Let $X_s=\sin^{-1}((c-\nu)/A)$
and $Y_s=\cos^{-1}((c-\nu)/A)$. In the comoving frame, the
recirculation region at the wall $Y=0$ contains an elliptic point
$q_1=(\pi/2,Y_s)$ and is bounded by the heteroclinic loop of the
hyperbolic fixed points $p_1=(X_s,0)$ and $p_2=(\pi-X_s,0)$.
Similarly, those elliptic and hyperbolic points at the wall $Y=\pi$
are $q_2=(3\pi/2,\pi-Y_s)$, $p_3=(\pi+X_s,\pi)$, and
$p_4=(2\pi-X_s,\pi)$, respectively, see
Figure~\ref{fig::vectorfield_nodisturb}. One may observe that there is a continuous family of \emph{invariant} sets 
in the comoving frame as
any fixed level set of the streamfunction bounds an {invariant} set.
In a stationary frame these elliptic and hyperbolic points (and
their heteroclinic loops) are just translated in the $x-$direction.
That is, any fixed level set of the time-dependent
streamfunction~\eqref{eq::wavehamiltonian} is a (time-dependent)
invariant manifold. We note, however, that the recirculation regions
are distinguished from the remainder of the cylinder as they are
separated from the jet flowing region, which has a different
dynamical fate. In the subsequent sections we will perturb this
somewhat ``degenerate'' system to destroy the continuum of invariant
sets in the comoving frame and produce a small number of
almost-invariant sets (see Definition \ref{aidefn}) in the comoving
frame, or coherent sets in the stationary frame).

\begin{figure}[tb]
\centerline{\includegraphics[scale=0.5]{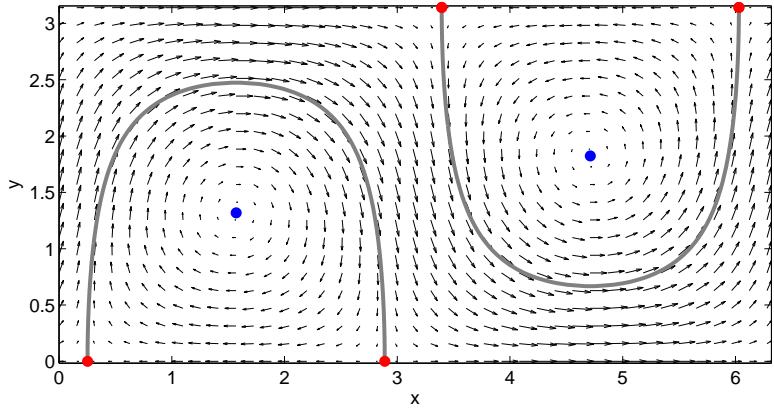}}
\caption{Vector fields in the comoving frame for the travelling wave
flow~\eqref{eq::wavehamiltonian}, for $A=1.0$ and $c=0.5$. The red
dots are the hyperbolic fixed points that are connected by the
heteroclinic loops. The blue dots are elliptic points in the centre
of recirculation regions.} \label{fig::vectorfield_nodisturb}
\end{figure}

\subsubsection{A coherent family: Mixing case}
\label{5.5.1} We modify the traveling wave model in the previous
section to allow mixing in the jet flowing region. We add a
perturbation to the system in the following way:
\begin{equation}\label{eq::travellingwavemixing}
\begin{split}
\dot{x} &=  c-A(\tilde{z}(t))\sin(x-\nu\tilde{z}(t))\cos(y)+\varepsilon G(g(x,y,\tilde{z}(t)))\sin(\tilde{z}(t)/2)\\
\dot{y} &=  A(\tilde{z}(t)))\cos(x-\nu \tilde{z}(t)))\sin(y).\\
\end{split}
\end{equation}
Here, $\tilde{z}(t)=6.6685z_1(t)$, where $z_1(t)$ is generated by the Lorenz flow in
Example~\ref{ex:LZ} with initial point $z(0)=(0,1,1.5)$,  $A(\tilde{z}(t))=1+0.125\sin(\sqrt{5}\tilde{z}(t))$,
$G(\psi):=1/{(\psi^2+1)}^2$ and the parameter function
$\psi=g(x,y,\tilde{z}(t)):=\sin(x-\nu\tilde{z}(t))\sin(y)+y/2-\pi/4$ vanishes at
the level set of the streamfunction of the unperturbed flow at
instantaneous time t=0, i.e., $s(x,y,0)=\pi/4$, which divides the
phase space in half.
We set
$\varepsilon=1$ as this value is sufficiently large to ensure no KAM
tori remain in the jet regime, but sufficiently small to maintain
islands originating from the nested periodic orbits around the
elliptic points of the unperturbed system.


We applied Algorithm \ref{ctsPalg} with $n=28800$, $M=80$, $N=40$,
$z=(0,1,1.5)$, and Algorithm \ref{ctseigenmodealg} for $z=(0,1,1.5)$ and $z=\xi(10,(0,1,1.5))$.  By
using a relatively large number of test points per grid box ($n=400$
points per box $B_j$) we are able to flow for $M=80$ units of time
and still well represent $\phi(80,\xi(-40,z),B_j)$.  Figure
\ref{fig::chaoticpushforward} shows that the resulting Oseledets
functions highlight the remaining islands in the perturbed
time-dependent flow. We calculate the eigenvalues of
$(P^{(80)}(\xi(-40,z))^\top\circ P^{(80)}(\xi(-40,z))^{1/2}$,
where $P^{(80)}(\xi(-40,z))$ is defined as in
(\ref{continuousapproxulam}), and find the top three to be
$$
L_1\approx 1.1100, L_2\approx 0.9691, L_3\approx 0.9676.
$$

\begin{figure}[tb]
\centerline{\includegraphics[scale=0.6]{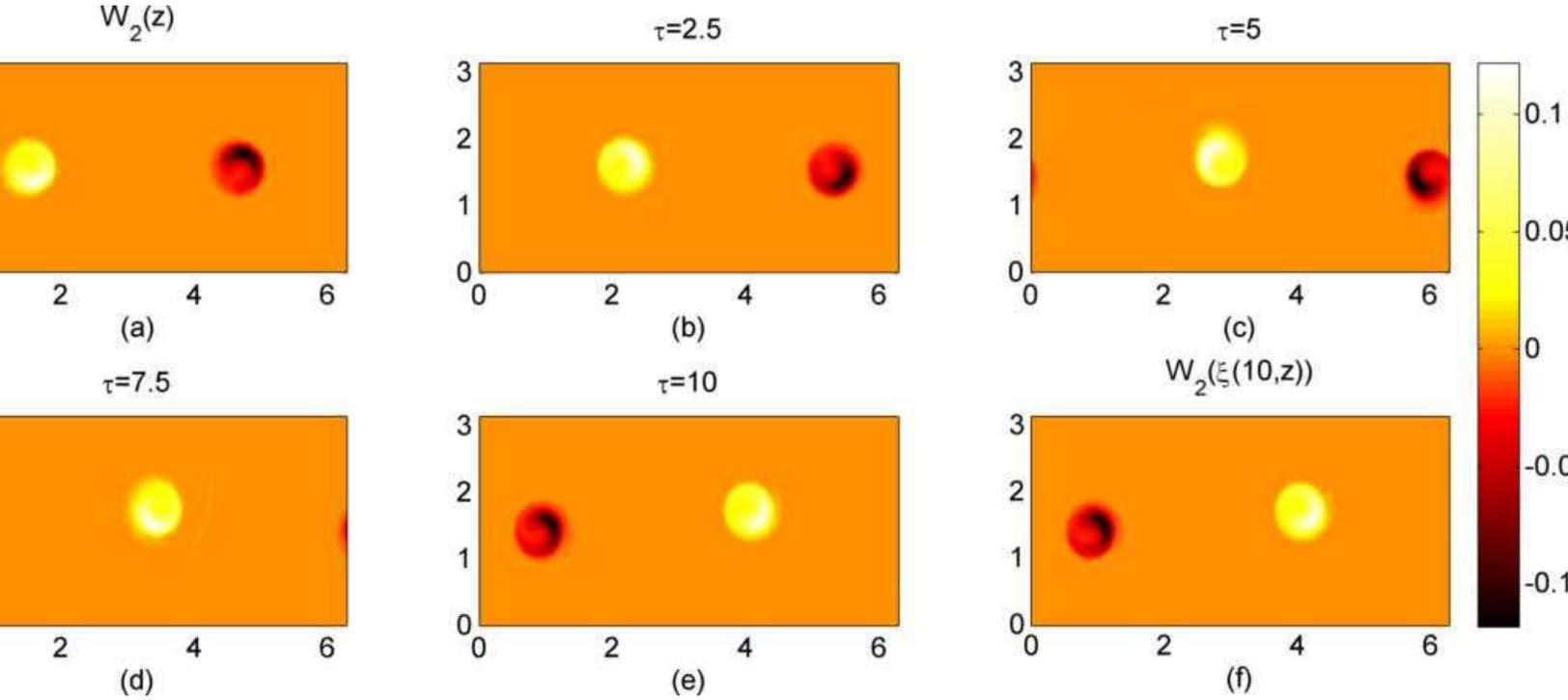}}
\caption{(a) Graph of approximate Oseledets function
$W_2^{(80,40)}(z)$ produced by Algorithm
\ref{ctseigenmodealg}. (b)-(e) Pushforwards of
$W_2^{(80,40)}(z)$ via multiplication by
$P^{(\tau)}(z)$ for $\tau={2.5, 5, 7.5, 10}$. (f)
$W_2^{(80,40)}(\xi(10,z))$ produced independently by
Algorithm \ref{ctseigenmodealg}; compare with (e)}
\label{fig::chaoticpushforward}
\end{figure}
By part 3 of Theorem \ref{thm:main} (bundle invariance of
$W_2(z)$) we should have
$P^{(10)}(z)W^{(80,40)}_2(z)\approx
W^{(80,40)}_2(\xi(10,z))$. This is demonstrated in Figure
\ref{fig::chaoticpushforward} by comparing subplots (e) and (f). In
Section \ref{sec:ctsCS} we will see how to extract coherent sets
from these Oseledets functions.

\section{Invariant Sets, Almost-Invariant Sets, and Coherent Sets}
\label{csdefnsection}

 We begin by briefly recounting some of the
background relevant to almost-invariant sets.  If $\Phi$ (resp.\
$\phi$) is autonomous, then $\Omega$ (resp. $\Xi$) consists of a
single point, and we may write $\Phi(-1,\omega,x)=\Phi(-1,x)$
(resp.\ $\phi(-t,z,x)=\phi(-t,x)$).
\begin{definition}\label{invdefn} In the autonomous setting, we call
$A$ an \emph{invariant set} if $\Phi(-1,A)=A$ (resp. \
$\phi(-t,A)=A$ for all $t\ge 0$).
\end{definition}
 The
following definition generalises invariant sets to
\emph{almost-invariant sets}. In the continuous time case we define:
\begin{definition}
\label{aidefn} Let $\mu$ be preserved by the autonomous flow $\phi$.
We will say that a set $A\subset M$ is
\emph{$\rho_0$-almost-invariant} over the interval $[0,\tau]$ if
\begin{enumerate}
 \item
\begin{equation}
\label{rhomu}
\rho_{\mu,\tau}(A):=\frac{\mu(A\cap\phi(-s,A))}{\mu(A)}\ge \rho_0
\end{equation}
for all $s\in [0,\tau]$,
\item $A$ is connected.
\end{enumerate}
\end{definition}
If $A\subset M$ is \emph{almost-invariant} over the interval
$[0,\tau]$, then for each $s\in[0,\tau]$, the probability (according to $\mu$) of a
trajectory leaving $A$ at some time in $[0,s]$, and not returning
to $A$ at time $s$ is relatively small.
In the discrete time setting, $\tau=1$, and the obvious changes are
made in Definition \ref{aidefn}. By convention we ask that $A$ is
connected; if $A$ is not connected, we consider each connected
component to be an almost-invariant set for suitable $\rho_0$.

We now begin to discuss the nonautonomous setting.
The notion of an invariant set is extended to an \textit{invariant family}.
\begin{definition}
\label{invariantset}\quad \begin{enumerate} \item \textbf{Discrete
time:} We will call a family of sets $\{A_{\sigma^k\omega}\}$,
$A_{\sigma^k\omega}\subset M$, $\omega\in\Omega$, $k\in\mathbb{Z}$ an \emph{invariant family} if
$\Phi(-k,\omega,A_\omega)=A_{\sigma^{-k}\omega}$ for all
$\omega\in\Omega$ and $k\in \mathbb{Z}^+$.
\item \textbf{Continuous time:} We will call a family of sets $\{A_{\xi(t,z)}\}$, $A_{\xi(t,z)}\subset M$,
$z\in\Xi$, $t\in\mathbb{R}$ an \emph{invariant family} if
$\phi(-t,z,A_z)=A_{\xi(-t,z)}$ for all $z\in \Xi$ and $t\in
\mathbb{R}^+$.
\end{enumerate}
\end{definition}

Motivated by a model of fluid flow, we imagine coherent sets as a
family of \textit{connected} sets with the property that the set
$A_\omega$ is \textit{approximately} mapped onto
$A_{\sigma^k\omega}$ by $k$ iterations of the cocycle from ``time''
$\omega$;  that is, $\Phi(k,\omega,A_\omega)\approx
A_{\sigma^k\omega}$. The definition of coherent sets combines the
properties of almost-invariant sets and an invariant family. As we
now have a \emph{family} of sets we require one more property beyond
those of Definition \ref{aidefn}, in addition to modifying the
almost-invariance property.  In the continuous time case we define:
\begin{definition}
\label{csdefn} Let $\mu$ be preserved by a flow $\phi$ and $0\le \rho_0\le 1$. Fix a
$z\in\Xi$. We will say that a family $\{A_{\xi(t,z)}\}_{t\ge 0}$,
$A_{\xi(t,z)}\subset M$, $t\ge 0$ is \emph{a family of
$\rho_0$-coherent sets} over the interval $[0,\tau]$ if:
\begin{enumerate}
\item
\begin{equation}
\label{rhomucts}
\rho_{\mu}(A_{\xi(t,z)},A_{\xi(t+s,z)}):=\frac{\mu(A_{\xi(t,z)}\cap\phi(-s,\xi(t+s,z),A_{\xi(t+s,z)}))}{\mu(A_{\xi(t,z)})}\ge
\rho_0,
\end{equation}
for all $s\in [0,\tau]$ and $t\ge 0$,
\item Each $A_{\xi(t,z)}$, $t\ge 0$ is connected,
\item $\mu(A_{\xi(t,z)})=\mu(A_{\xi(t',z)})$ for all $t,t'\ge 0$,
\end{enumerate}
\end{definition}
In discrete time, we replace (\ref{rhomucts}) with
\begin{equation}
\label{rhomudiscrete}
\rho_{\mu}(A_\omega):=\frac{\mu(A_\omega\cap\Phi(-1,\sigma\omega,A_{\sigma\omega}))}{\mu(A_\omega)}\ge \rho_0,
\end{equation}
$\tau$ necessarily becomes 1,
and we make the obvious changes to the other items in  Definition  \ref{csdefn}.

We remark that by selecting some $A\subset M$ of positive $\mu$
measure and defining $A_{\xi(t,z)}:=\phi(t,z,A)$, $t\ge 0$, the
family $\{A_{\xi(t,z)}\}_{t\ge 0}$, is a family of 1-coherent sets.
Such a family is not of much dynamical interest, as there is nothing
distinguishing this family from one constructed with another
connected subset $A'\subset M$. We are not interested in these
constructions of coherent sets, and in practice the numerical
algorithm we present in the next section is unlikely to find such
sets for chaotic systems. 

\section{Coherent sets from Oseledets functions}
\label{cssection}

We wish to find a family of sets $\{A_{z}\}$ so that
\begin{equation} \label{rhomuctsdefn}
\rho_{\mu}(A_{z},A_{\xi(s,z)}):=\frac{\mu(A_{z}\cap\phi(-s,\xi(s,z),A_{\xi(s,z)}))}{\mu(A_{z})}
\end{equation}
is large for $s\in[0,\tau]$.
We may rewrite the RHS of (\ref{rhomuctsdefn})
as
\begin{eqnarray}
\label{eqn3}\left(\int \chi_{A_z}\cdot\chi_{\phi(-s,A_{\xi(s,z)})}\,\mathrm{d}\mu\right)/\mu(A_z)
&=&\left(\int
\mathcal{P}_z^{(s)}\chi_{A_z}\cdot\chi_{A_{\xi(s,z)}} \,\mathrm{d}\mu\right)/\mu(A_z).
\end{eqnarray}
For (\ref{eqn3}) to be large we require
$\mathcal{P}_z^{(s)}\chi_{A_z}\approx \chi_{A_{\xi(s,z)}}$.

 Let us now
make a connection with the Oseledets functions $f_2(z)=\sum_{i=1}^n
w_{2,i}(z)\chi_{B_i}$ where $w_2(z)\in W_2^{(M,N)}(z)$ obtained in Algorithm
\ref{ctseigenmodealg}.
In the following discussion, we scale $f_2(z)$ so that
$\|f_2(z)\|_1=1$ for all $z\in\Xi$. To convert the family of
Oseledets functions into a family of coherent sets, we modify a
heuristic due to \cite{dellnitz_junge_99} that has been successfully
used in the autonomous setting. The heuristic is to set
$A_z=\{f_2(z)>0\}$, $z\in\Xi$.
We show that $\mathcal{P}_z^{(s)}f^+_2(z)-f^+_2(\xi(s,z))$ is small for
moderate $s$ and large $\lambda_2$;  we then heuristically infer that
$\mathcal{P}_z^{(s)}\chi_{A_z}\approx \chi_{A_{\xi(s,z)}}$.

\begin{proposition}
\label{heurprop}
Let $\lambda_2=\lim_{s\to\infty} (1/s)\log\|\mathcal{P}_z^{(s)}f_2(z)\|<0$ be the second largest Lyapunov exponent from
Theorem \ref{thm:main} and $f_2(z)\in W_2(z)$ a corresponding Oseledets
function, normalised so that $\|f_2(z)\|_1=1$. Given an $\epsilon>0$ there is an $S\ge 0$ so that $s\ge
S$ implies $\|\mathcal{P}_z^{(s)}f^+_2(z)-f^+_2(\xi(s,z))\|_1\le
(1-e^{(\lambda_2-\epsilon)s})/2$.
\end{proposition}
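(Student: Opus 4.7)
The plan is to reduce the $L^1$ estimate to a scalar decay computation via property (3) of Theorem \ref{thm:main}, and then to use a Hahn--Jordan decomposition to isolate the mass that leaks between the positive and negative parts of $f_2$.

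Since $W_2(z)$ is one-dimensional (cf.\ Remark \ref{remark4a}), property (3) of Theorem \ref{thm:main} gives a scalar $c_s$ with $\mathcal{P}_z^{(s)} f_2(z)=c_s f_2(\xi(s,z))$; consistent sign conventions let me assume $c_s>0$, and then $c_s=\|\mathcal{P}_z^{(s)} f_2(z)\|_1$. The hypothesis $(1/s)\log c_s\to\lambda_2$ furnishes an $S\ge 0$ beyond which $c_s\ge e^{(\lambda_2-\epsilon)s}$. Because $\mathcal{P}_z^{(s)}$ preserves Lebesgue integrals and $c_s\to 0$, $\left|\int f_2(z)\,dm\right|=c_s\left|\int f_2(\xi(s,z))\,dm\right|\to 0$, so $\int f_2(z)\,dm=0$, and combining with $\|f_2(z)\|_1=1$ yields $\|f_2^+(z)\|_1=\|f_2^-(z)\|_1=\tfrac12$; the same balance holds at time $\xi(s,z)$.

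Applying $\mathcal{P}_z^{(s)}$ to the Jordan splitting $f_2(z)=f_2^+(z)-f_2^-(z)$ gives
\[
\mathcal{P}_z^{(s)}f_2^+(z)-\mathcal{P}_z^{(s)}f_2^-(z)=c_s f_2^+(\xi(s,z))-c_s f_2^-(\xi(s,z)).
\]
Since $f_2^+(\xi(s,z))$ and $f_2^-(\xi(s,z))$ have disjoint supports, the right-hand side is already a Hahn--Jordan decomposition of the signed function on the left; by minimality there is a single nonnegative function $R$ satisfying $\mathcal{P}_z^{(s)} f_2^\pm(z)=c_s f_2^\pm(\xi(s,z))+R$. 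Integrating either identity and using $\|\mathcal{P}_z^{(s)} f_2^\pm(z)\|_1=\tfrac12$ pins down $\|R\|_1=(1-c_s)/2$.

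Subtracting $f_2^+(\xi(s,z))$ from the $+$ identity produces the key formula
\[
\mathcal{P}_z^{(s)}f_2^+(z)-f_2^+(\xi(s,z))=R-(1-c_s)\, f_2^+(\xi(s,z)),
\]
a difference of two nonnegative functions of common $L^1$ mass $(1-c_s)/2$ with vanishing integral. The positive part of this difference is pointwise dominated by $R$, hence $\|(\,\cdot\,)^+\|_1\le\|R\|_1=(1-c_s)/2$; combining with $c_s\ge e^{(\lambda_2-\epsilon)s}$ delivers the stated bound. The main obstacle I anticipate is keeping the factor of $\tfrac12$: a naive triangle inequality applied to $R-(1-c_s)f_2^+(\xi(s,z))$ would only give $1-c_s$, and recovering the sharper $(1-c_s)/2$ requires the refined one-sided estimate from Hahn--Jordan minimality together with the zero-integral condition to avoid double-counting the leakage mass.
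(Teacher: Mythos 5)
Your argument breaks down at the very last step: you correctly establish $\|g^+\|_1\le\|R\|_1=(1-c_s)/2$ for $g:=\mathcal{P}_z^{(s)}f_2^+(z)-f_2^+(\xi(s,z))$, but since (as you yourself note) $\int g\,\mathrm{d}m=0$, one has $\|g\|_1=\|g^+\|_1+\|g^-\|_1=2\|g^+\|_1$, so what you have actually shown is $\|g\|_1\le 1-c_s$, twice the claimed bound. Your instinct that the factor of $\tfrac12$ needs care was right, but the Hahn--Jordan refinement you invoke does not rescue it: controlling only the positive part of a zero-mean signed function controls exactly half of its $L^1$ norm, and the zero-integral condition cuts against you rather than for you. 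The gap cannot be repaired within your scheme, because $R$ and $(1-c_s)f_2^+(\xi(s,z))$ have no a priori reason to share support, and when their supports are disjoint $\|g\|_1$ is exactly $1-c_s$.

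The paper evades the factor of two by never separately renormalising $f_2(\xi(s,z))$: it compares $\mathcal{P}_z^{(s)}f_2^+(z)$ directly with $(\mathcal{P}_z^{(s)}f_2(z))^+$. Because $\mathcal{P}_z^{(s)}$ is a positive operator and $f_2(z)\le f_2^+(z)$, one has the pointwise inequality $(\mathcal{P}_z^{(s)}f_2(z))^+\le\mathcal{P}_z^{(s)}f_2^+(z)$, so this difference is \emph{nonnegative} and its $L^1$ norm equals its integral, namely $\tfrac12-\int(\mathcal{P}_z^{(s)}f_2(z))^+\,\mathrm{d}m\le(1-e^{(\lambda_2-\epsilon)s})/2$. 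That nonnegative difference is precisely your $R$, and you correctly computed $\|R\|_1=(1-c_s)/2$---so you were effectively one step from the paper's argument, but the extra subtraction of $(1-c_s)f_2^+(\xi(s,z))$ destroyed the sign definiteness and doubled the norm. For the stated inequality to hold as written one should read $f_2(\xi(s,z))$ as the cocycle pushforward $\mathcal{P}_z^{(s)}f_2(z)$ itself (not rescaled to unit norm), under which interpretation $f_2^+(\xi(s,z))=(\mathcal{P}_z^{(s)}f_2(z))^+$ and the one-line integral estimate closes the proposition.
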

\begin{proof}
 Given
$\epsilon>0$ we know that there exists $S\ge 0$ such that for all $s\ge
S$ one has
$e^{\lambda_2-\epsilon}\le\|\mathcal{P}_z^{(s)}f_2(z)\|^{1/s}\le 1$.
Since
$\mathcal{P}_z^{(s)}f_2(z)=(\mathcal{P}_z^{(s)}f_2(z))^+-(\mathcal{P}_z^{(s)}f_2(z))^-$
and $\int \mathcal{P}_z^{(s)}f_2(z)\,\mathrm{d}m=0$, one has
$\|\mathcal{P}_z^{(s)}f_2(z)\|_1=\int
(\mathcal{P}_z^{(s)}f_2(z))^++(\mathcal{P}_z^{(s)}f_2(z))^-\,\mathrm{d}m=2\int
(\mathcal{P}_z^{(s)}f_2(z))^+\,\mathrm{d}m$. Thus $\int
(\mathcal{P}_z^{(s)}f_2(z))^+\,\mathrm{d}m\ge e^{(\lambda_2-\epsilon)s}/2$.
Since $(\mathcal{P}_z^{(s)}f_2(z))^+\le \mathcal{P}_z^{(s)}f^+_2(z)$, one
has $\|\mathcal{P}_z^{(s)}f^+_2(z)-(\mathcal{P}_z^{(s)}f_2(z))^+\|=\int
\mathcal{P}_z^{(s)}f^+_2(z)-(\mathcal{P}_z^{(s)}f_2(z))^+\,\mathrm{d}m$. As
$\|f_2(z)\|=1$ and $\int f_2(z)\,\mathrm{d}m=0$, we have $\int f^+_2(z)\,\mathrm{d}m=1/2$ and since
$\mathcal{P}_z^{(s)}$ preserves integrals, $\int
\mathcal{P}_z^{(s)}f^+_2(z)\,\mathrm{d}m=1/2$. Thus, $\int
\mathcal{P}_z^{(s)}f^+_2(z)-(\mathcal{P}_z^{(s)}f^+_2(z))\,\mathrm{d}m\le
(1-e^{(\lambda_2-\epsilon)s})/2$. 
\end{proof}
The preceding discussion heuristically addresses item 1.\ of
Definition \ref{csdefn}.
%
Regarding item 2 of Definition \ref{csdefn}, as we are extracting
the sets $A_z$ from the Oseledets functions $f_2(z)$, the
connectivity of the sets will depend on the regularity of the
Oseledets functions. This is a delicate question and relatively
little can be said formally at present. In the autonomous case,
roughly speaking, one expects smooth eigenfunctions for
Perron--Frobenius operators of smooth expanding systems
\cite{keller_84,ruelle_82}, and eigendistributions (smooth in
expanding directions, distributions in contracting directions) in
uniformly hyperbolic settings \cite{bkl}.  These properties may
carry over to the non-autonomous setting;  recent results in the bounded variation setting show they do \cite{froyland_lloyd_quas2}.  If a small amount of
noise is added by postmultiplying the Perron--Frobenius operator by a
smoothing (e.g.\ diffusion) operator, then the Oseledets functions must be
smooth. This physical addition of a small amount of noise is one way
to guarantee regularity of the Oseledets functions and connectivity
of the associated coherent sets.

Finally we note that if $\mu=m$  one has $\int f_2(z)(x)\,\mathrm{d}\mu(x)=0$
and so we must have $\mu(A_z)=1/2$ for all $z\in\Xi$. Thus, item 3.\
of Definition \ref{csdefn} is satisfied by the choice
$A_z=\{f_2(z)>0\}$.  If $\mu\neq m$, then it may be necessary to
further tweak the choice of the $A_z$ to ensure that item 3.\ of
Definition \ref{csdefn} is satisfied.  This additional tweak is
described in Algorithm \ref{csalg}.

\subsection{A numerical algorithm}
For a fixed time
$z\in\Xi$, we seek to approximate a pair of sets $A_{z}$ and
$A_{\xi(\tau,z)}$ for which
\begin{equation} \label{rhomuctsdefn2}
\rho_{\mu}(A_{z},A_{\xi(\tau,z)}):=\frac{\mu(A_{z}\cap\phi(-\tau,\xi(\tau,z),A_{\xi(\tau,z)}))}{\mu(A_{z})}
\end{equation}
is maximal. The quantity $\rho_{\mu}(A_{z},A_{\xi(\tau,z)})$ is simply
the fraction of $\mu$-measure of $A_z$ that is covered by a pullback
of the set $A_{\xi(\tau,z)}$ over a duration of $\tau$.  For maximal
coherence, we wish to find pairs $A_z$, $A_{\xi(\tau,z)}$ that maximise
$\rho_{\mu}(A_{z},A_{\xi(\tau,z)})$.
We present a heuristic to find such a pair of sets based upon the
vectors $W^{(M,N)}(z)$ and $W^{(M,N)}(\xi(\tau,z))$ corresponding to some Lyapunov spectral value $\lambda$ close to 0. This heuristic is a
modification of heuristics to determine maximal almost-invariant
sets, see
\cite{froyland_dellnitz_03,froyland_05,froyland_padberg_08}. In the
terminology of the prior discussion in \S7, rather than setting
$A_z:=\{f(z)>0\}$, we allow $A_z:=\{f(z)>c\}$ or
$A_z:=\{f(z)<c\}$ for some $c\in\mathbb{R}$ in the hope of finding
$A_z, A_{\xi(\tau,z)}$ with an even greater value of
$\rho_{\mu}(A_{z},A_{\xi(\tau,z)})$.  This additional flexibility also
permits a matching of $\mu(A_z)$ and $\mu(A_{\xi(\tau,z)})$.

\begin{algorithm}[To determine a pair of maximally coherent sets at
times $z, \xi(\tau,z)$] \label{csalg}
\begin{enumerate}
\quad
\item Determine $W^{(M,N)}(z)$ and $W^{(M,N)}(\xi(\tau,z))$ for some $\tau>0$ according to Algorithm \ref{ctseigenmodealg}.
\item Set $\hat{A}^+_z(c)=\bigcup_{i:W^{(M,N)}(z)>c}B_i$ and
$\hat{A}^+_{\xi(\tau,z)}(c)=\bigcup_{i:W^{(M,N)}(\xi(\tau,z))>c}B_i$,
restricting the values of $c$ so that $\mu(\hat{A}^+_z(c)),
\mu(\hat{A}^+_{\xi(\tau,z)}(c))\le 1/2$.  These are sets constructed from
grid boxes whose corresponding entry in the $W^{(M,N)}$ vectors is
above a certain value.
\item Define $\eta(c)=\argmin_{c'\in\mathbb{R}}
|\mu(\hat{A}^+_z(c))-\mu(\hat{A}^+_{\xi(\tau,z)}(c'))|$.  Given a value
of $c$, $\eta(c)$ determines the set $\hat{A}^+_{\xi(\tau,z)}(\eta(c))$
that best matches the $\mu$-measure of $\hat{A}^+_z(c)$, as required
by item 3 of Definition \ref{csdefn}.
\item Set $c^*=\argmax_{c\in \mathbb{R}}
\rho_\mu(\hat{A}^+_z(c),\hat{A}^+_{\xi(\tau,z)}(\eta(c)))$.  The value of
$c^*$ is selected to maximise the coherence.
\item Define $A_z:=\hat{A}^+_z(c^*)$ and
$A_{\xi(\tau,z)}:=\hat{A}^+_{\xi(\tau,z)}(\eta(c^*))$.
\end{enumerate}
\end{algorithm}

\begin{remark}
\label{pmremark} \begin{enumerate} \quad \item One can repeat
Algorithm \ref{csalg}, replacing $\hat{A}^+_z(c)$ and $\hat{A}^+_{\xi(\tau,z)}(c)$ with
$\hat{A}^-_z(c)=\bigcup_{i:W^{(M,N)}(t)<c}B_i$ and
$\hat{A}^-_{\xi(\tau,z)}(c)=\bigcup_{i:W^{(M,N)}(\xi(\tau,z))<c}B_i$ respectively. See
\cite{froyland_padberg_08} for further details.
\item Care should be taken regarding the sign of $W^{(M,N)}(z)$ and
$W^{(M,N)}(\xi(\tau,z))$.  Visual inspection may be required in order to
check that the vectors have the same ``parity''.
\end{enumerate}
\end{remark}

%

\subsection{Coherent Sets for a 1D discrete time nonautonomous system}
\label{1dcssection}

We return to the map cocycle $\Phi$ and Perron--Frobenius cocycle
described in Example \ref{Discreteeg} and identify coherent sets. We
use two methods: firstly, inspection of the composition of maps as
perturbations of maps with invariant sets, and secondly using the
general method of Algorithm 5.

The map cocycle $\Phi$ is defined in terms of a map $H_a$ which has
an almost-invariant set, and this gives rise to a family of coherent
sets in the following way. Recall the definitions of the maps $T_i$,
$i=1,\ldots,4$ and shift space $\Omega$ determined by the adjacency
matrix $B$. The maps $T_i$ have the property that if $B_{i,j}=1$,
then any inner $R$ factors cancel in $T_j\circ T_i$. More generally,
for any $\omega\in\Omega$, we have cancellation of all intermediate
$R$ factors:
\begin{eqnarray}\label{eq:cancelR}
\Phi(k,\omega,\cdot) = R^s\circ
H_{a_{(\sigma^{k-1}\omega)_0}}\circ\cdots\circ H_{a_{\omega_0}}\circ
R^{-t},
\end{eqnarray}
where $s,t\in\set{0,1}$ are given by
$$
s(\omega,k) = \left\{\begin{array}{ll}
0, &\omega_{k-1}\, \textrm{odd},\\
1, &\omega_{k-1}\, \textrm{even},
\end{array}\right.
\, \textrm{and}\ \ t(\omega,k) = \left\{\begin{array}{ll}
0, &\omega_{0} \leq 2\\
1, &\omega_{0} >2.
\end{array}\right.
$$
For the map $H_0$, the interval $[0,0.5]$ is invariant. Moreover,
$[0,0.5]$ is almost-invariant for $H_a$ with $\rho_{\mu}([0,0.5])=
1-2a$. By (\ref{eq:cancelR}), we if we set
\begin{equation}
\label{heurA} \tilde{A}_{\sigma^k\omega}=R^{s(\omega,k)}([0,0.5]),\quad
\mbox{ for each } k\in\N, \end{equation}
 then
$$\rho_\mu(\tilde{A}_{\sigma^k\omega},\tilde{A}_{\sigma^{k+1}\omega})=1-2a_{\omega_k}.
$$
Thus $\set{\tilde{A}_{\sigma^k\omega}}_{k\in\N}$ is a family of
$\rho_0$-coherent sets with
$\rho_0=1-2\max\set{a_1,\ldots,a_4}=0.843$. In the same way, the
invariant set $[0.5,1]$ of $H_0$ leads to a family
$\set{R^{s(\omega,k)}([0.5,1])}_{k\in\N}$ of $\rho_0$-coherent sets
with the same $\rho_0$.

In order to demonstrate the methods of this article, we now show how
Algorithm \ref{csalg} can be used.  We may use the Oseledets
subspaces computed in Section \ref{discreteeigenmodesection} to find
a family of coherent sets. First we apply Algorithm \ref{csalg} to
find a coherent set for the time step $k=0$ to $k=1$. We calculate
$\rho_\mu\left( \hat{A}^+_{\omega}(c), \hat{A}^+_{\sigma\omega}(c) \right)$ as $c$
varies over the elements of the vector $f_2^{(20,10)}(\omega)$; see
Figure \ref{fig:NonFirst} (left). The maximum value of
$\rho_\mu\left(\hat{A}^+_\omega(c),\hat{A}^+_{\sigma\omega}(\eta(c))\right)$ is $0.890$.
The set $A_\omega$ is found to be the interval $[0.11,0.58]$ of
length $\mu(A_\omega)=0.47$; see Figure \ref{fig:NonFirst} (right).
\begin{figure}[htb]
    \centering
    \psfrag{b}{$\rho_\mu$}
    \psfrag{a}{$c$}
    \psfrag{L0}[B][br]{$f_2^{(20,10)}(\omega)$}
        \includegraphics[width=1.0\textwidth]{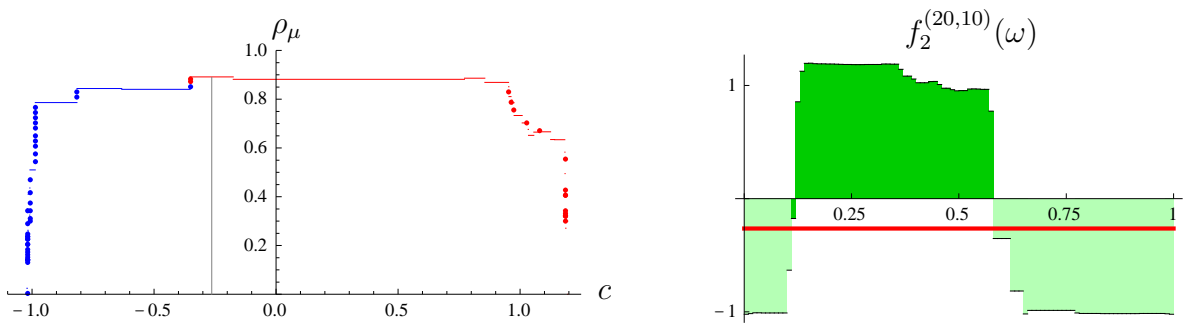}
    \caption{(left): The function $\rho_\mu\left(\hat{A}^+_\omega(c),\hat{A}^+_{\sigma\omega}(\eta(c))\right)$ takes its maximum on the interval $(-0.352,-0.176)$ and so we take the midpoint $c^*=-0.264$ as the optimal threshold. (right): Taking this optimal threshold (shown in dashed green) for the eigenvector
    $f_2^{(20,10)}(\omega)$ identifies the coherent set $A_\omega=[0.11,0.58]$ (shown in dark orange).}
    \label{fig:NonFirst}
\end{figure}

We note that the set $A_\omega$ found by Algorithm \ref{csalg} is
\emph{not} the same as the $\tilde{A}_\omega$ produced by the intuitive
construction (\ref{heurA}). In the latter case, $\tilde{A}_\omega=[0,1/2]$,
$\tilde{A}_{\sigma\omega}=[0,1/2]$, and
$\rho_\mu(\tilde{A}_\omega,\tilde{A}_{\sigma\omega})=1-2a_{\omega_0}=1-2a_1=1-\pi/20\approx
0.843$, significantly lower than the value of 0.890 found using
Algorithm \ref{csalg}.

We may extend Algorithm \ref{csalg} in order to find a sequence of
coherent sets $\{A_{\sigma^i\omega}\}_{i=0}^K$. Since we require the
measure of a sequence of coherent sets to be constant, we seek to
maximize the mean value of $\rho_\mu$ over a given time range as we
vary the measure of the sets.

\begin{algorithm}[To determine a sequence of maximally coherent sets over a range of
times $\omega,\ldots, \sigma^K\omega$] \label{csseqalg}
\begin{enumerate}
\quad
\item Follow steps 1.-3. of Algorithm \ref{csalg}  for each $k=0,\ldots,K-1$ using $\tau=1$ to obtain sets $\hat{A}^+_{\sigma^k\omega}(c)$.
\item Let $c_k(\ell):=\argmin_{c\in\mathbb{R}} \left| \mu(\hat{A}^+_{\sigma^k\omega}(c)) - \ell \right|$.
\item Compute $\ell^*:=\argmax_{\ell\in(0,0.5]} \frac{1}{K}\sum_{k=0}^{K-1} \rho_\mu\left( \hat{A}^+_{\sigma^k\omega}(c_k(\ell)), \hat{A}^+_{\sigma^{k+1}\omega}(c_{k+1}(\ell)) \right)$.
\item For $k=0,\ldots,K-1$, define $A_{\sigma^k\omega}:= \hat{A}^+_{\sigma^k\omega}(c_k(\ell^*))$.
\end{enumerate}
\end{algorithm}

To demonstrate Algorithm \ref{csseqalg}, we use the approximate
Oseledets functions $f_2(\sigma^k\omega)$, $k=0,\ldots,5$, to find a
sequence of six coherent sets $\{A_{\sigma^k\omega}\}_{k=0}^5$ for
the map cocycle $\Phi$. Plotting
$\frac{1}{6}\sum_{k=0}^5\rho_\mu\left(
\hat{A}^\pm_{\sigma^k\omega}(c_k(\ell)),
\hat{A}^\pm_{\sigma^{k+1}\omega}(c_{k+1}(\ell)) \right)$ against
$\ell$ (see Figure \ref{fig:RhoMean}), we find a unique maximum of
$0.891$, which occurs at $\ell^*=0.47$.


\begin{figure}[htb]
    \centering
        \psfrag{rho}{$\overline{\rho_\mu}$}
        \psfrag{ell}{$\ell$}
        \includegraphics[width=0.60\textwidth]{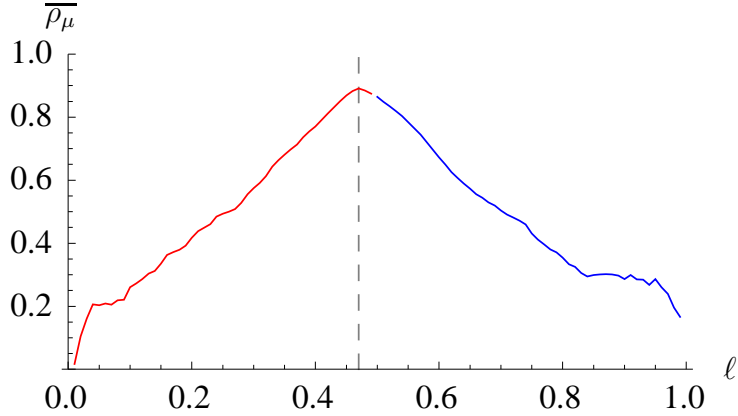}
        \caption{The graph of $\overline{\rho_\mu}:=\frac{1}{6}\sum_{k=0}^5\rho_\mu\left( \hat{A}^\pm_{\sigma^k\omega}(c_k(\ell)), \hat{A}^\pm_{\sigma^{k+1}\omega}(c_{k+1}(\ell)) \right)$ against $\ell$, where we take $\hat{A}^+_{\sigma^k\omega}(c_k(\ell))$ for $\ell\leq 0.5$ and $\hat{A}^-_{\sigma^k\omega}(c_k(\ell))$ otherwise. The maximum $0.891$ occurs at $\ell^*=0.47$. The red section of the curve corresponds to $\hat{A}^+_{\sigma^k\omega}(c_k(\ell))$ and the blue section to $\hat{A}^-_{\sigma^k\omega}(c_k(\ell))$.}
    \label{fig:RhoMean}
\end{figure}

Figure \ref{fig:NonCS} shows the graph of
$f_2^{(20,10)}(\sigma^k\omega)$ with the threshold $c_k(\ell^*)$ for
$k=0,\ldots,5$, and in each case the set
$\hat{A}^+_{\sigma^k\omega}(c_k(\ell^*))$ is indicated by shading.
Since coherent sets are required to be connected, we must find the
interval closest to each $\hat{A}^+_{\sigma^k\omega}(c_k(\ell^*))$.
For $k=0,2,3,4,5$ the set $\hat{A}^+_{\sigma^k\omega}(c_k(\ell^*))$
is itself an interval and we set
$A_{\sigma^k\omega}=\hat{A}^+_{\sigma^k\omega}(c_k(\ell^*))$. The
set $\hat{A}^+_{\sigma\omega}(c_k(\ell^*))$ has two components,
$[0.12.0.58]$ and $[0.60,0.61]$, and so we set
$A_{\sigma\omega}=[0.12,0.59]$. Table \ref{tab:CoSets} lists the
coherent sets $A_{\sigma^k\omega}$ and the values of $\rho_\mu\left(
A_{\sigma^k\omega}, A_{\sigma^{k+1}\omega}\right)$ for
$k=0,\ldots,5$.
\begin{table}[htb]
    \centering
        \begin{tabular}{|r|cccccc|}
        \hline
$k$ & 0 & 1& 2& 3& 4& 5 \\
$\omega_k$ & 1 & 2 & 3 & 2 & 4 & 3 \\
$A_{\sigma^k\omega}$ & $[0.11,0.58]$ &  $[0.12,0.59]$ &  $[0.35,0.82]$ &  $[0.07,0.54]$ &  $[0.35,0.82]$ & $[0.35,0.82]$ \\
$\rho_k$ & $0.89$ & $0.87$ & $0.87$ & $0.96$ &  $0.90$ & $0.87$ \\
        \hline
        \end{tabular}
    \caption{Coherent sets $A_{\sigma^k\omega}$ and the values of $\rho_\mu\left( A_{\sigma^k\omega}, A_{\sigma^{k+1}\omega}\right)$ for $k=0,\ldots,5$.}
    \label{tab:CoSets}
\end{table}

It is interesting to compare the locations of the coherent sets with
their corresponding maps in the mapping cocycle, see Figure
\ref{fig:Squares}. \begin{figure}[htb]
    \centering
    \psfrag{T1}[B][br]{$T_{\omega}=T_1$}
    \psfrag{T2}[B][br]{$T_{\sigma\omega}=T_2$}
    \psfrag{T3}[B][br]{$T_{\sigma^2\omega}=T_3$}
    \psfrag{T4}[B][br]{$T_{\sigma^3\omega}=T_2$}
    \psfrag{T5}[B][br]{$T_{\sigma^4\omega}=T_4$}
    \psfrag{T6}[B][br]{$T_{\sigma^5\omega}=T_3$}
    \includegraphics[width=0.75\textwidth]{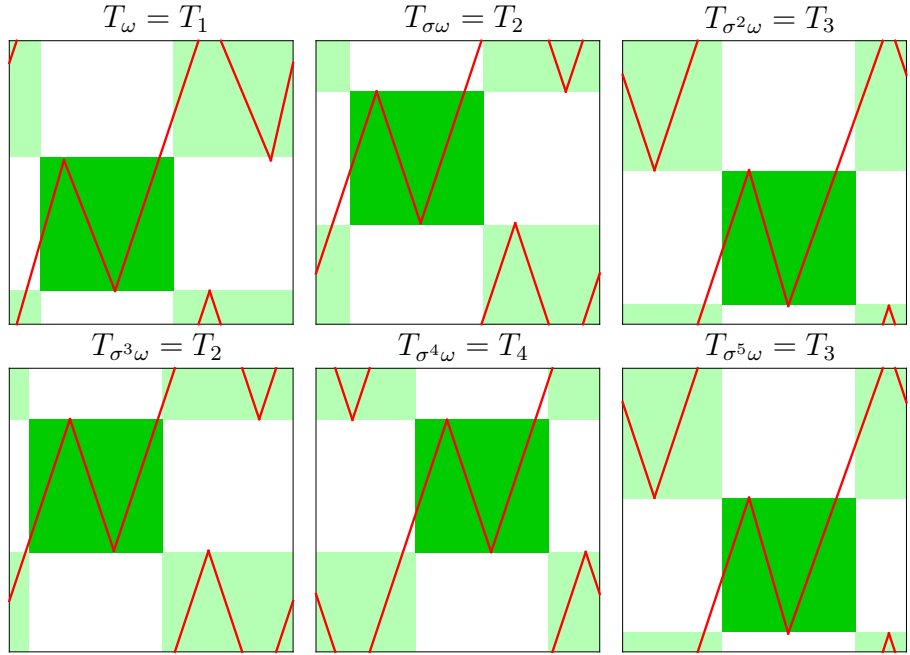}
    \caption{Graphs of $T_{\sigma^k\omega}$ showing $A_{\sigma^k\omega}\times A_{\sigma^{k+1}\omega}$ for $k=0,\ldots,5$.}
    \label{fig:Squares}
\end{figure}
As for the previously constructed family
$\set{R^{s(\omega,k)}([0.5,1])}_{k\in\N}$, the coherent sets
alternate between two positions separated by a rotation of
approximately $0.25$. However, the mean value of $\rho_\mu$ is
greater for the sequence $A_{\sigma^k\omega}$ constructed from Algorithm \ref{csseqalg} since in each case the
coherent set matches up well with local maxima and minima of the
preceding map.

\subsection{Coherent Sets in a 2D continuous time nonautonomous system}
\label{sec:ctsCS}

We apply Algorithm \ref{csalg} to the Oseledets subspaces
$W^{(80,40)}(z)$ and $W^{(80,40)}(\xi(10,z))$
calculated in Section \ref{5.5.1} and displayed in Figure
\ref{fig::chaoticpushforward}.
The optimal coherent sets $\hat{A}^+_{z}$ and
$\hat{A}^+_{\xi(10,z)}$ are obtained at the threshold values
$c^*=0.0043$ and $\eta(c^*)=0.0052$, which gives
$\rho_\mu(\hat{A}^+_{z}(c^*),\hat{A}^+_{\xi(10,z)}(\eta(c^*)))=0.9605$,
see Figure~\ref{fig::thresholdcurvechaotic3}.
\begin{figure}[htb]
\centerline{\includegraphics[scale=0.5]{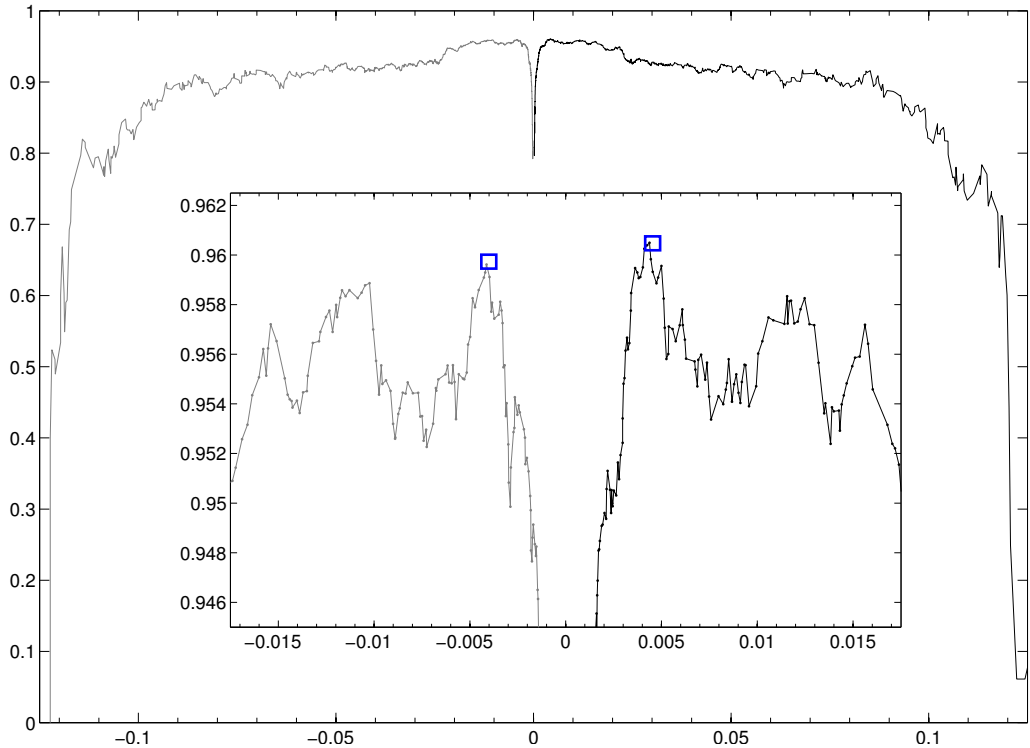}}
\caption{Thresholding curve
$\rho_\mu(\hat{A}^{-}_{z}(c),\hat{A}^{-}_{\xi(10,z)}(\eta(c)))$
and
($\rho_\mu(\hat{A}^{+}_{z}(c),\hat{A}^{+}_{\xi(10,z)}(\eta(c)))$
are plotted in grey and black, respectively. The optimal threshold is
marked with a rectangle.} \label{fig::thresholdcurvechaotic3}
\end{figure}
For $\hat{A}^-_{z}$ and $\hat{A}^-_{\xi(10,z)}$ the optimal
threshold is at $c^*=-0.0040$ and $\eta(c^*)=-0.0051$ and
$\rho_\mu(\hat{A}^-_{z}(c^*),\hat{A}^-_{\xi(10,z)}(\eta(c^*)))=0.9599$.
The coherent sets at $\hat{A}^\pm_{z}$ and $\hat{A}^\pm_{\xi(10,z)}$
and the images of sample points in $\hat{A}^\pm_{z}$ are shown in
Figure~\ref{fig::coherentsetwavechaotic}.

\begin{figure}[htb]
\centerline{\includegraphics[scale=0.5]{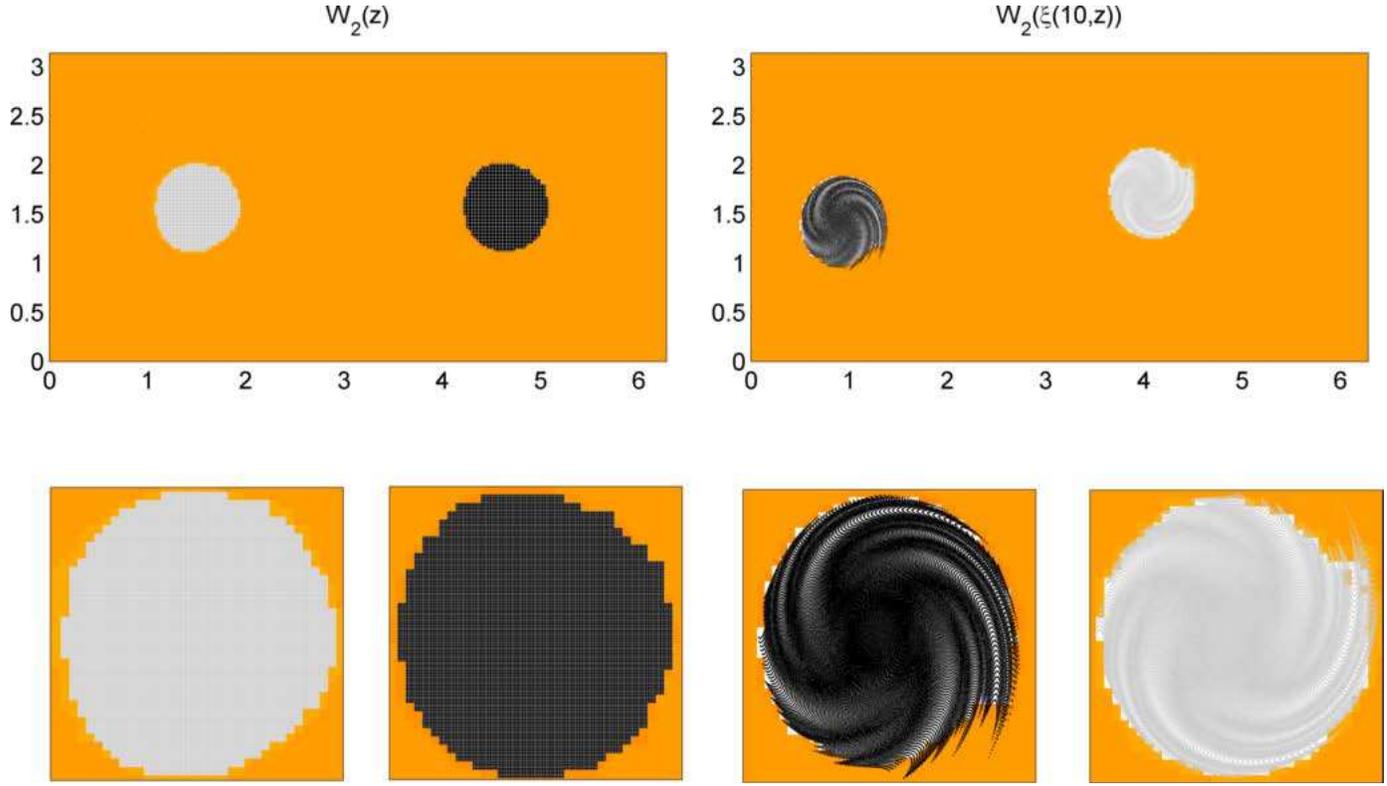}}
\caption{[Top left] The coherent sets $A^+_{z}$ (grey) and $A^-_{z}$
(black) and [Top right] $A^+_{\xi(10,z)}$ (grey) and $A^-_{\xi(10,z)}$ (black) are
identified by thresholding the Oseledets functions.
Overlays of $\phi(10,z,A^+_{z})$ (grey) and $\phi(10,z,A^-_{z})$ (black) are also shown. [Bottom left]
Zooms of $A^{+}(z)$  and $A^-(z)$ .
[Bottom right] Overlays of $\phi(10,z,A^{\pm}_z)$
(grey/black dots) on $A^{\pm}_{\xi(10,z)}$ (white), displaying the loss of
mass over 10 time units duration from $z=(0,1,1.5)$.}
\label{fig::coherentsetwavechaotic}
\end{figure}


In Figure \ref{fig::coherentsetwavechaotic} we note that the
grey set $A^+_z$ on the left at time $z=(0,1,1.5)$ flows approximately to the light grey
set $A^+_{\xi(10,z)}$ on the right at time $\xi(10,z)$.  Similarly for the black sets $A^-_z$ and $A^-_{\xi(10,z)}$.  This
carrying of the time $z$ coherent sets to the time $\xi(10,z)$ coherent sets by
the aperiodic flow is only approximate, as
$\rho_\mu(\hat{A}^+_{z}(c^*),\hat{A}^+_{\xi(10,z)}(\eta(c^*)))=0.9605$
and
$\rho_\mu(\hat{A}^-_{z}(c^*),\hat{A}^-_{\xi(10,z)}(\eta(c^*)))=0.9599$.
Thus, we expect a loss of about 10\% under the advection of the
flow. Figure \ref{fig::coherentsetwavechaotic} also zooms onto
$A^+_{z}$ and $A^-_{z}$ to demonstrate this loss of mass. To
make this loss even more apparent, we continue to flow forward for
50 time units.
\begin{figure}[htb]
\centerline{\includegraphics[scale=0.5]{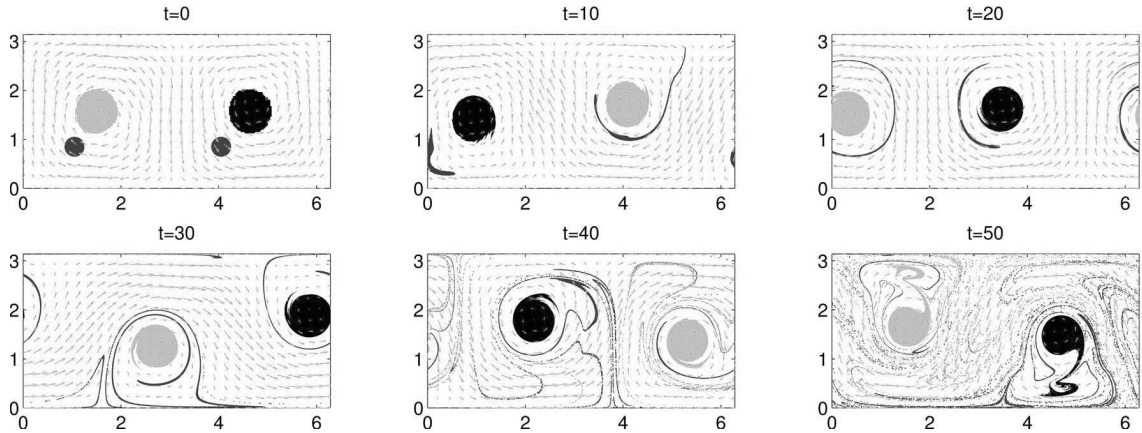}}
\caption{Trajectories of the perturbed
system~(\ref{eq::travellingwavemixing}) for $\varepsilon=1$. The large light grey ($A^+_{\xi(t,z)}$) and black ($A^-_{\xi(t,z)}$)
blobs are the coherent sets identified by our approach. The other
(medium grey) blobs are chosen nearby the coherent sets to show strong mixing away from the coherent regions.}\label{fig::wavechaoticmixing}
\end{figure}
Figure~\ref{fig::wavechaoticmixing} shows that the (black) coherent
sets $A^+_{z}$ and $A^-_{z}$ do indeed disperse over time,
however at a much slower rate than the arbitrarily chosen (grey)
sets.  The coherent sets $A^+_{z}, A^-_{z}$ are just
single elements of a time parameterised family \{$A^+_{\xi(t,z)},
A^-_{\xi(t,z)}\}_{t\ge 0}$ of coherent sets that at any given initial
time describe those sets that will disperse most slowly over a
duration of 10 time units.

\section{Final Remarks}
We have formulated a new mathematical and algorithmic approach for
identifying and tracking coherent sets in nonautonomous systems. Our
new approach generalises existing successful transfer operator
methodologies that have been used in the autonomous setting. Our
constructions address the question raised by \cite{liu_haller_04} of
how to study strange eigenmodes and persistent patterns observed in
forced fluid flows in the general time-dependent situation. Future
work will include applying these techniques to detect and track
mobile coherent regions in oceanic and atmospheric flows, extending significantly the flow times studied in \cite{froyland_padberg_england_treguier_07,npg} and \cite{SFM}.

\bibliographystyle{plain}

\bibliography{nonauto_18}

\end{document}